\newlength{\noteWidth}
\long\def\notes#1{\ifinner
           {\footnotesize #1}
           \else
           \marginpar{\parbox[t]{\noteWidth}{\raggedright\footnotesize #1}}
       \fi\typeout{#1}}
\def\notes#1{\typeout{read notes: #1}}  
\def\spm#1{\notes{SPM:  #1}}
\def\rd#1{{\color{red}#1}}
\def\ToDo#1{\medbreak \rd{\noindent\framebox{\parbox{.95\hsize}{\raggedright {\bf To do:} #1 }}}\medbreak}
\def\ToDo#1{\typeout{read to do entries}}
\newcommand{\bsq}{\rule{1.25ex}{1.25ex}}
\def\bqed{\ifmmode\bsq\else{\unskip\nobreak\hfil
\penalty50\hskip1em\null\nobreak\hfil\bsq
\parfillskip=0pt\finalhyphendemerits=0\endgraf}\fi\medskip}
\newenvironment{DesignCDC}[1]{\null\smallbreak \noindent \textbf{#1} \nobreak \ }{\null\par}
\def\wham#1{\smallbreak\noindent\textbf{#1}}
\def\rDD{DD-r}
\def\xz{x^\circ}
\def\tilutilB{\tilutil_{B_\zeta}} 
\def\He{H_{\text{\rm\scriptsize e}}}
\def\preHe{H_{\text{\rm\scriptsize e}}^\circ}
\def\TF{G}
\def\EF{h}
\def\EFn#1{\Lambda_{#1}}
\def\preclH{{\cal H}^\circ}
\def\preH{H^\circ}  
 \def\adj{\Lsh}   
 \def\adjnat{\triangledown} 
\def\urls#1{{\small \url{#1}}}
\def\sq{\hbox{\rlap{$\sqcap$}$\sqcup$}}
\def\qed{\ifmmode\sq\else{\unskip\nobreak\hfil
\penalty50\hskip1em\null\nobreak\hfil\sq
\parfillskip=0pt\finalhyphendemerits=0\endgraf}\fi\medskip}
\long\def\defbox#1{\framebox[.9\hsize][c]{\parbox{.85\hsize}{%
\parindent=0pt
\baselineskip=12pt plus .1pt      
\parskip=6pt plus 1.5pt minus 1pt 
 #1}}}
\long\def\beginbox#1\endbox{\subsection*{}%
\hbox{\hspace{.05\hsize}\defbox{\medskip#1\bigskip}}%
\subsection*{}}
\def\endbox{}
\def\transpose{{\hbox{\it\tiny T}}}
\newsavebox{\junk}
\savebox{\junk}[1.6mm]{\hbox{$|\!|\!|$}}
\def\argmax{\mathop{\rm arg\, max}}
\def\state{{\sf X}}
\def\stateu{{\sf X}_{\sf u}}
\def\staten{{\sf X}_{\sf n}}
\newcommand{\field}[1]{\mathbb{#1}}
\def\Re{\field{R}}
\def\ind{\field{I}}
\def\cP{{\check{P}}}
\def\bfmath#1{{\mathchoice{\mbox{\boldmath$#1$}}%
{\mbox{\boldmath$#1$}}%
{\mbox{\boldmath$\scriptstyle#1$}}%
{\mbox{\boldmath$\scriptscriptstyle#1$}}}}
\def\bfmm{\bfmath{m}}
\def\bfmX{\bfmath{X}}
\def\bfmY{\bfmath{Y}}
\def\bfmhhaY{\bfmath{\hhaY}} 
\def\bfmhhaY{\hbox to 0pt{$\widehat{\bfmY}$\hss}\widehat{\phantom{\raise 1.25pt\hbox{$\bfmY$}}}}
\def\bfzeta{\bfmath{\zeta}}
\def\til={{\widetilde =}}
\def\clE{{\cal E}}
\def\clH{{\cal H}}
\def\clI{{\cal I}}
\def\clW{{\cal W}}
 \def\FRAC#1#2#3{\genfrac{}{}{}{#1}{#2}{#3}}
\def\ddtp{{\mathchoice{\FRAC{1}{d^{\hbox to 2pt{\rm\tiny +\hss}}}{dt}}%
{\FRAC{1}{d^{\hbox to 2pt{\rm\tiny +\hss}}}{dt}}%
{\FRAC{3}{d^{\hbox to 2pt{\rm\tiny +\hss}}}{dt}}%
{\FRAC{3}{d^{\hbox to 2pt{\rm\tiny +\hss}}}{dt}}}}
\def\ddzeta{{\mathchoice{\FRAC{1}{d}{d\zeta}}%
{\FRAC{1}{d}{d\zeta}}%
{\FRAC{3}{d}{d\zeta}}%
{\FRAC{3}{d}{d\zeta}}}}
\def\eqdef{\mathbin{:=}}
\def\Prob{{\sf P}}
\def\Expect{{\sf E}}
\def\average#1,#2,{{1\over #2} \sum_{#1}^{#2}}
\def\eye(#1){{\bf(#1)}\quad}
\newtheorem{theorem}{Theorem}[section]
\newtheorem{proposition}[theorem]{Proposition}
\newtheorem{lemma}[theorem]{Lemma}
\def\Lemma#1{Lemma~\ref{#1}}
\def\Prop#1{Prop.~\ref{#1}}
\def\Theorem#1{Theorem~\ref{#1}}
\def\Table#1{Table~\ref{#1}}
\def\Section#1{Section~\ref{#1}}
\def\eq#1/{(\ref{e:#1})}
\newcommand{\beqn}[1]{\notes{#1}%
\begin{eqnarray} \elabel{#1}}
\newcommand{\eeqn}{\end{eqnarray} }
\newcommand{\beq}[1]{\notes{#1}%
\begin{equation}\elabel{#1}}
\newcommand{\eeq}{\end{equation}}
\def\bdes{\begin{description}}
\def\edes{\end{description}}
\newcounter{rmnum}
\newenvironment{romannum}{\begin{list}{{\upshape (\roman{rmnum})}}{\usecounter{rmnum}
\setlength{\leftmargin}{8pt}
\setlength{\rightmargin}{6pt}
\setlength{\itemsep}{2pt}
\setlength{\itemindent}{-1pt}
}}{\end{list}}
\newcounter{anum}
\def\bfDelta{\bfmath{\Delta}}
\def\util{\mathchoice{\mbox{\small$\cal U$}}%
{\mbox{\small$\cal U$}}%
{\mbox{$\scriptstyle\cal U$}}%
{\mbox{$\scriptscriptstyle\cal U$}}}
\def\tilutil{\mathchoice{\mbox{\small$\cal \widetilde U$}}%
{\mbox{\small$\cal\widetilde U$}}%
{\mbox{$\scriptstyle\cal \widetilde U$}}%
{\mbox{$\scriptscriptstyle\cal \tilde U$}}}
\def\meanutil{\mbox{\scriptsize$\bar{\cal U}$}}
\def\Ebox#1#2{%
\begin{center}
\includegraphics[width= #1\hsize]{#2} \end{center}}
\def\Fig#1{Fig.~\ref{#1}}
\def\ind{\field{I}}
\def\Re{\field{R}}
\title{Distributed Randomized Control \\
 for Demand Dispatch
}
\author{Ana Bu\v{s}i\'{c} and Sean Meyn
\thanks{Research   supported by
French National Research Agency grant ANR-12-MONU-0019,  and  NSF grants CPS-0931416 and CPS-1259040.}
\thanks{A.B.\ is with Inria and the Computer Science Dept. of \'Ecole Normale Sup\'erieure, Paris, France;
S.M. is with the Department of Electrical and Computer
Engg.\ at the University of Florida, Gainesville. }%
}
\begin{document}

\maketitle
\thispagestyle{empty}

\begin{abstract} 

The paper concerns design of control systems for \textit{Demand Dispatch}  to obtain ancillary services to the power grid by harnessing inherent flexibility in many loads. The role of ``local intelligence'' at the load has been advocated in prior work; randomized local controllers that manifest this intelligence are convenient for loads with a finite number of states.   
\notes{ With careful design,  the grid operator can harness this flexibility to regulate supply-demand balance.   The deviation in aggregate power consumption can be controlled just as generators provide ancillary service today.  This is achieved without loss of quality of service to the end consumers. } 
The present work introduces two new design techniques for these randomized controllers:
\begin{romannum}
\item 
The \textit{Individual Perspective Design} (IPD)  is based on the solution to a one-dimensional family of Markov Decision Processes, whose objective function is formulated from the point of view of a single load.   The  family of dynamic programming equation appears complex, but it is shown that it is obtained through the solution of a single ordinary differential equation.

\item 
The \textit{System Perspective Design} (SPD)  is motivated by a single objective of the grid operator:  Passivity of any linearization of the aggregate input-output model.  A solution is obtained that can again be computed through the solution of a single ordinary differential equation.  
\end{romannum}
Numerical results complement these theoretical results.

\end{abstract}



\section{Introduction} 
\label{s:intro}

\spm{
Work on intro, especially prior work, and Kalman stuff}

Renewable energy sources such as wind and solar power have a high degree of unpredictability and time variation, which makes balancing demand and supply increasingly challenging. One  way to address this challenge is to harness the inherent flexibility in demand of many types of loads.   Loads can supply a range of ancillary  services to the grid, such as the balancing reserves required at Bonneville Power Authority (BPA),  or the Reg-D/A regulation reserves used at PJM \cite{barbusmey14}.  Today these services are secured by a balancing authority (BA) in each region.

\subsection{Demand dispatch}

These grid services can be obtained without impacting quality of service (QoS)  for consumers \cite{chebusmey14,barbusmey14}, but this is only possible through design.
The term \textit{Demand Dispatch} is used in this paper 
to emphasize the difference between the goals of our own work and traditional demand response.  
\spm{Please see cite{brolureispiwei10} and summarize what they have actually done}

\begin{figure}[h]
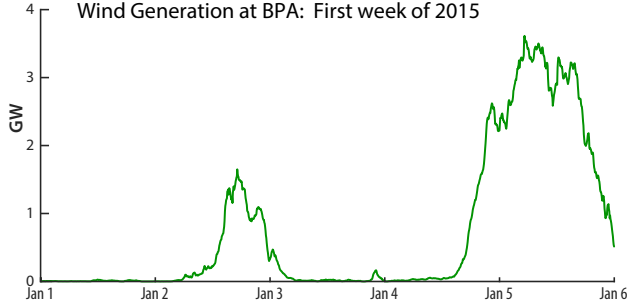

\Ebox{.5}{windHICSS}  
\caption{Wind generation at BPA.}
\label{f:windHICSS}
\vspace{-1.25ex}
\end{figure} 

\spm{stole text from our proposal and my SIAM article to define VES:}
Consumers use power for a reason, and expect some guarantees on the QoS they receive. 
\spm{axe for CDC: For example, the temperature in a building or refrigerator must lie within strict bounds. } 
The grid operator desires  reliable ancillary service,
obtained from the inherent flexibility in the consumer's power consumption.   
These seemingly conflicting goals can be achieved simultaneously, but the solution requires local control:  an appliance must monitor its QoS and other state variables, it  
must receive grid-level information (e.g., from the BA),  and based on this information it must  adjust its power consumption.  With proper design,  an aggregate of loads can be viewed at the grid-level as \textit{virtual energy storage} (VES).  Just like a battery,  the aggregate  provides ancillary service, even though it cannot produce energy.

\Fig{f:windHICSS} shows the wind generation in the BPA region during the first week of 2015.   There is virtually no power generated on New Year's Day,  and generation ramps up to nearly 4GW on the morning of January 5.  
\notes{... There is also volatility introduced from loads and from neighboring balancing authorities.} 
In this example we show how to supply a demand of exactly 4GW during this time period, using  generation from wind and other resources.

\begin{figure}[h]
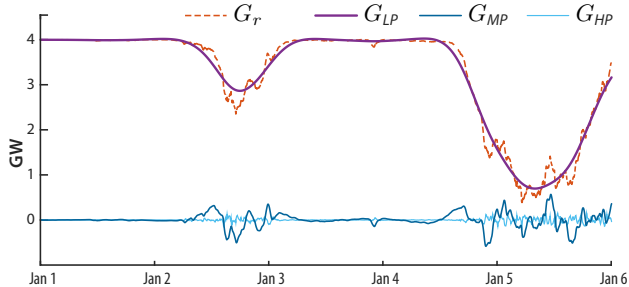

\Ebox{.5}{3ProductsHICSS}  
\caption{One power signal representing significant energy delivery, and two zero-energy power signals combine with wind generation to match a 4GW load. }
\label{f:3ProductsHICSS}
\vspace{-1.25ex}
\end{figure}

Let $G_r(t)$ denote the additional power required at time $t$, in units of GWs.  For example, on the first day of this week we have $G_r(t)\approx 4$.    Day-ahead forecast of the low frequency component of generation from wind is highly predictable.   
We let $G_{\text{\it LP}}$ denote the signal obtained by passing the forecast of $G_r$ through a low pass filter.   The signal $G_{\text{\it HP}}$ is obtained by filtering $G_r- G_{\text{\it LP}}$ using a high pass filter,  and $G_{\text{\it MP}} = G_r - G_{\text{\it LP}}-G_{\text{\it HP}}$.   Each of these filters is causal, and their parameters  are a design choice.
Examples are shown in \Fig{f:3ProductsHICSS}.

 It is not difficult to ramp hydro-generation up and down to accurately track the  power signal $G_{\text{\it LP}}$.
 This is an energy product that might be secured in today's day-ahead markets. 
 
The other two signals shown in  \Fig{f:3ProductsHICSS} take on positive and negative values.  Each represents a total energy of approximately zero, hence it would be a mistake to attempt to obtain these services in an energy market.   Either could be obtained from a large fleet of batteries or flywheels.  However, it may be much cheaper to employ flexible loads via demand dispatch.   The signal $G_{\text{\it HP}}$ can be obtained by modulating  the fans in commercial buildings (perhaps by less than 10\%)  \cite{haolinkowbarmey14}.    The signal  $G_{\text{\it MP}} $ can be supplied in whole or in part by loads such as water heaters, commercial refrigeration, and water chillers.

Low frequency variability from solar gives rise to the famous ``duck curve'' anticipated at CAISO\footnote{an ISO in California: \urls{www.caiso.com}},  which is represented as the hypothetical ``net-load curve'' in \Fig{f:duck}.     The actual net-load curve is the difference between load and generation from renewables;  the drop from 20 to 10 GW is expected with the introduction of 10 GW of solar in the state of California.  This curve highlights the ramping pressure placed on conventional generation resources.  

As shown in this figure, the volatility and steep ramps associated with California's duck curve can be addressed using a frequency decomposition:  The plot shows how the net-load can be expressed as the sum of four signals distinguished by frequency.    Variability introduced by the low frequency component  can be rejected using traditional resources such as thermal generators,  along with some flexible loads (e.g., from flexible industrial manufacturing).  The mid-pass signal shown in the figure would be a challenge to generators for various reasons, but this zero-mean signal, as well as the higher frequency components, can be tracked using a variety of flexible loads.

The control architecture described in this paper is not limited to handling disturbances from wind and solar energy.  \Fig{f:ContingencyRecharge} illustrates how the same frequency decomposition can be used to allocate resources following an unexpected contingency, such as a generator outage.

\begin{figure*}[h!]
    \centering
    \begin{subfigure}[t]{0.45\textwidth}
        \centering
        \includegraphics[width=\hsize]{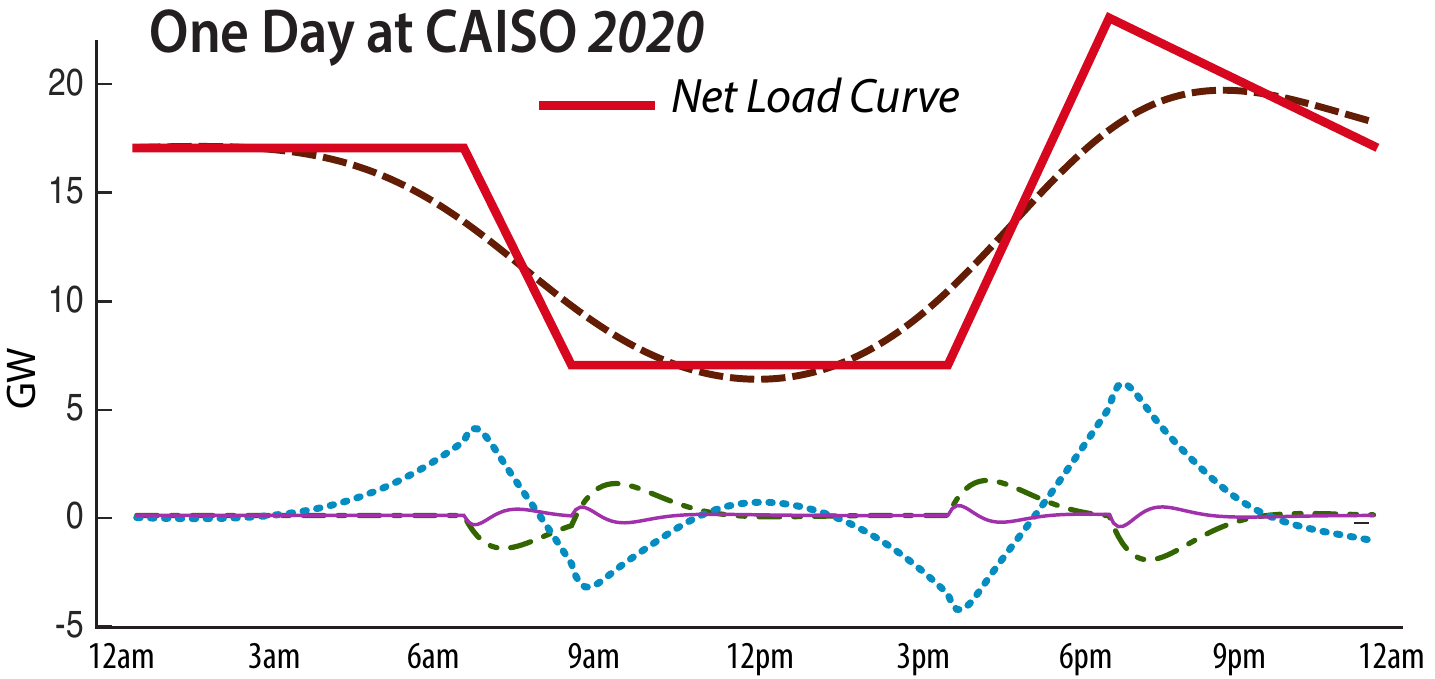}
        \caption{The  ``duck curve''  is decomposed as the sum of four  signals:
 The three high-frequency  components can be obtained using VES.}
         \label{f:duck}
    \end{subfigure}%
\hfil
 \begin{subfigure}[t]{0.45\textwidth}
        \centering
        \includegraphics[width=\hsize]{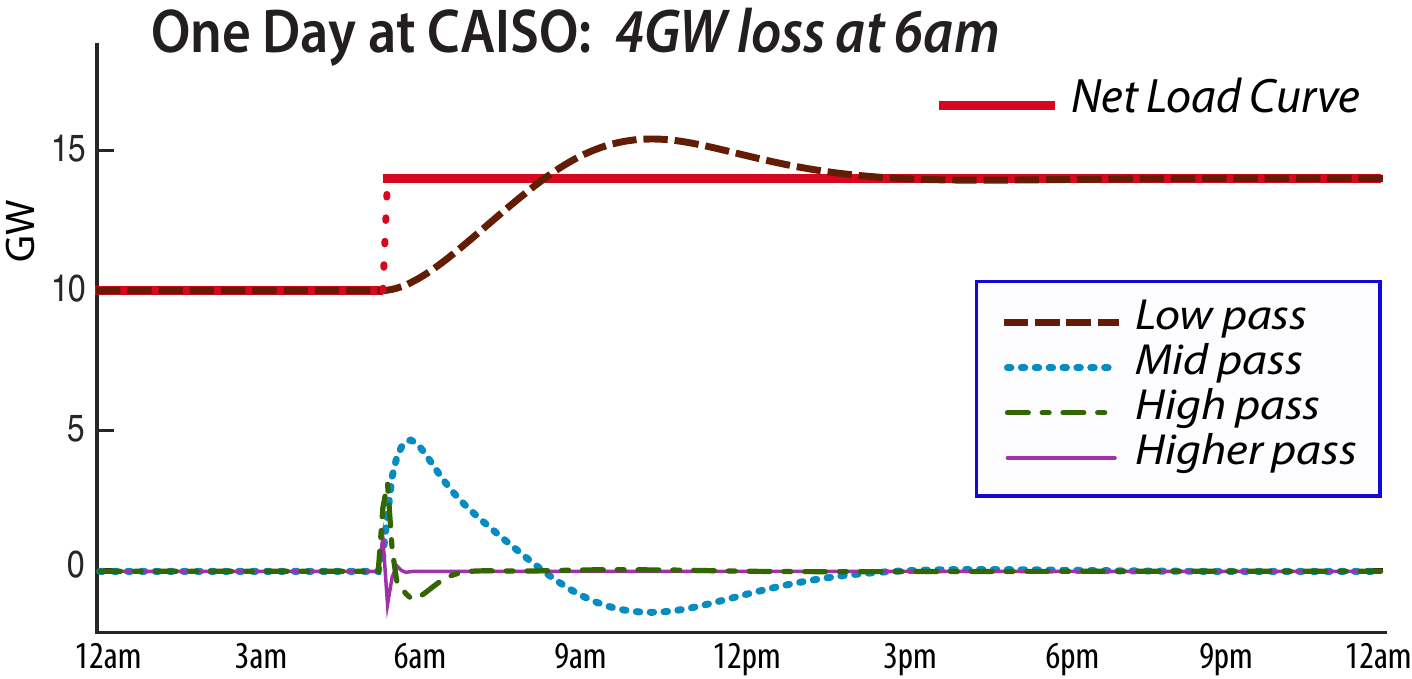}
        \caption{An unexpected power outage requires quick ramping of power generation.     The  zero-energy components are obtained using VES.}
        \label{f:ContingencyRecharge}
    \end{subfigure}
    \smallskip
    
    \caption{Frequency decomposition in two examples in which large ramps in generation are required. In each case, the low-pass energy component can be obtained through conventional generation, or load-shedding.  The remaining components of net-load are obtained using VES.}
     \label{f:freqDecomp}
\vspace{-1.5ex}
\end{figure*}

For loads whose power consumption cannot be varied continuously,  we have argued in prior work that a distributed randomized control architecture is convenient for design  \cite{meybarbusyueehr15,chebusmey14,chebusmey15b}.  
\spm{Ana feels "varied continuously" should be revised.  It is time to submit, and I can't think of a revision}
This architecture includes local control to maintain bounds on the quality of service delivered by the loads,  and also to ensure high quality ancillary service to the grid.  

Analysis of the aggregate is based on  a mean-field model.

\subsection{Mean-field model}

We restrict to the setting of the prior work  \cite{meybarbusyueehr15,chebusmey14}, based on the following  distributed control architecture.  A family of transition matrices $\{P_\zeta :\zeta\in\Re\}$ is constructed to define local decision making.   Each load evolves as a controlled Markov chain on a finite state space, with common input $\bfzeta=(\zeta_0,\zeta_1,\dots)$.  it is assumed that the scalar signal $\bfzeta$ is broadcast to each load.   If a load is in state $x$ at time $t$,  and the value $\zeta_t$ is broadcast,  then the load transitions to the state $x'$ with probability $P_{\zeta_t}(x,x')$.  Letting $X^i_t$ denote the state of the $i$th load at time $t$,  and assuming $N$ loads, the empirical pdf   (probability mass function) is defined as the average,
\[
\mu^N_t(x) = \frac{1}{N} \sum_{i=1}^N \ind\{ X^i_t =x\},\qquad x\in\state.
\]
The mean-field model is the deterministic system defined by the evolution equations,
\begin{equation}
\mu_{t+1} =\mu_t P_{\zeta_t},\quad t\ge 0,
\label{e:MFJ}
\end{equation}
in which $\mu_t$ is a row vector.   Under general conditions on the model and on $\mu_0$ it can be shown that $\mu^N_t$ is approximated by $\mu_t$.

In this prior work it is assumed that  average power consumption is obtained through measurements or state estimation:
Assume that 
$\util(x)$ is the power consumption when the load is in state $x$,
for some function $\util\colon\state\to\Re_+$.  The average power consumption is defined by,
\[
y^N_t =  \frac{1}{N} \sum_{i=1}^N  \util( X^i_t),\qquad x\in\state.
\]
which is approximated using the mean-field model: 
\begin{equation}
y_t = \sum_x \mu_t(x)\util(x),\quad t\ge 0.
\label{e:obs}
\end{equation}
The mean-field model is a  state space model that is linear in the state $\mu_t$, and nonlinear in the input $\zeta_t$.   The observation  process  \eqref{e:obs} is also linear as a function of the state.  
Assumptions imposed in the prior work
 \cite{chebusmey14,chebusmey15b,meybarbusyueehr15} imply that the input is a continuous function of these values.
 

In \cite{meybarbusyueehr15}, the design of the feedback law 
$\zeta_t = \phi_t(y_0,\dots, y_t)$ is based on a linearization of this state space model.   One goal of the present paper is to develop design techniques to ensure that the linearized input-output model has desirable properties for control design at the grid level.

\subsection{Contributions}

Several new design techniques are introduced in this paper, and the applications go far beyond prior work:
\begin{romannum}
\item  \textit{Optimal design}.
In the prior work \cite{meybarbusyueehr15}, the family of transition matrices $\{P_\zeta\}$ was constructed  based on an average-cost optimal control problem (MDP).   The cost function in this MDP was parameterized by the scalar $\zeta$.  
In this prior work, the optimal control problem was completely unconstrained,  in the sense that any choice of $P$ was permissible in the optimal control formulation.
\spm{axe:  If some randomness is exogenous, then this approach leads to an infeasible solution --- an optimal solution may require the load to have the ability to change the weather!  }
The optimal control formulation proposed here is far more general:  We allow some randomness by design, and some exogenous randomness that is beyond our control.  These contributions are summarized in   \Theorem{t:IPD}.

\notes{Not the main result any more:
\\
The dynamic programming equations become significantly  more complex,  but remain tractable.    \Theorem{t:IPD} summarizes the main results:  \textit{It is found that the solution to the entire family of MDPs can be obtained through the solution of a single ordinary differential equation {\rm (ODE)}.}   
}

\notes{I don't think this is necessary here:
It would be desirable to the BA for a collection of similar loads  to behave like an ideal wire -- without resistance or delay.  This is not possible because all physical systems have temporal dynamics. 
The following approximation to this idealization is called   \textit{passivity}.  Since we consider passivity of the linearization, this is equivalent to the following property of its transfer function:   ...}

\item \textit{Passivity by design}.
A discrete-time transfer function $F$ is \textit{positive-real} if it is stable (all poles are strictly within the unit disk), and the following bound holds:
\[
F(e^{j\theta} )+ F(e^{-j\theta} )\ge 0,\quad \theta\in\Re
\]
It is \textit{strictly positive-real} if the inequality is strict for all $\theta$.  A linear system is passive if it is positive real.
\spm{Ana asks if the relevance is clear to these readers.   I will think about how to give
concise motivation}

Let  $(A,B,C)$ describe a state-space representative of the linearization, with transfer function $\TF(z) = C(Iz-A]^{-1}B$.
Consider  the delay-free model with transfer function $\TF^+(z) \eqdef z \TF(z)$.  That is,
\begin{equation}
\TF^+(z)  =zC[Iz-A]^{-1} B = \sum_{k=0}^\infty C A^k B z^{-k}
\label{e:G+}
\end{equation}
A new approach to design of the transition matrices is introduced in this paper to ensure that the linearization is strictly positive real.  The main conclusions are summarized in \Theorem{t:SPD}.   

\item \textit{ODE methods for design}.  A unified  computational framework is introduced.
The construction of the transition matrices $\{P_\zeta :\zeta\in\Re\}$ is  obtained as the solution to a single ODE for each of the  design techniques  (i) and (ii).

\item \textit{Applications.}   Prior work on distributed control for demand dispatch focused on a single collection of loads:  residential pool pumps \cite{meybarbusehr13,meybarbusyueehr15,chebusmey15b}.   The motivation was ease of exposition, and also the fact that the design methodology required special assumptions on nominal behavior.  The new methodology developed in this paper relaxes these assumptions, and allows application to any load with discrete power states, such as a refrigerator, or other thermostatically controlled loads (TCLs).

\end{romannum}

\subsection{Prior work}

\notes{Need lots on prior work. refs to 
  \cite{huacaimal07,gasgauleb12}
  and clear explanation of why we are different from all others.  NOBODY CAN TRACK LIKE WE CAN!}

There are many recent papers with similar goals -- to create a science to support demand dispatch.    In \cite{matkoccal13} and its sequels, all control decisions are at the balancing authority,  and this architecture then requires state estimation to obtain the grid-level control law.  A centralized deterministic approach is developed in \cite{sanhaopoo14,biehanandsto13}.    None of this prior work considers design of local control algorithms, which is the focus of this paper.  
%
%
%
%

Passivity was established in   the prior work  \cite{busmey14}, but only for continuous time models for which the nominal model (with $\zeta=0$) is a \textit{reversible} Markov process.   
\spm{axe: This is a very strong requirement on the model.  In the present work we \textit{do not} assume reversibility.  Rather, we show that it is possible to design $\{P_\zeta :\zeta\in\Re\}$  so that $\TF^+$ is strictly positive-real. } 
It follows that $\TF^+$ 
is minimum phase, and hence the
 original transfer function $\TF(z) =z^{-1}\TF^+(z)$ is also minimum phase.

\spm{need a reference.  I found something from the Center for Computer Research in Music and Acoustics at Stanford:  Minimum Phase (MP) polynomials in $ z$, \url{https://ccrma.stanford.edu/~jos/prf/}
I see he gives a proof without reference on page 4 of this doc \url{https://ccrma.stanford.edu/~jos/prf/prf.pdf}
\\
Ana found this:
Direct Adaptive Control Algorithms
Theory and Applications. Kaufman, Howard, Barkana, Itzhak, Sobel, Kenneth
\\
I think we should look here (our friend John!)
K. Zhou, J.C. Doyle, K. Glover, Robust and Optimal Control,
Prentice-Hall, Englewood Cliffs, NJ, 1995.
They are cited here \url{https://www.ee.iitb.ac.in/~belur/ee752/files/sorensen-passivity-SCL.pdf}
}

The remainder of the paper is organized into four sections.   Several design techniques are introduced in 
\Section{s:design},  and  \Section{s:LTI} presents general methodology for constructing and analyzing the linearized mean-field dynamics.    Examples are contained in
\Section{s:ex}, and conclusions in
\Section{s:conc}.

%

\section{Design} 
\label{s:design}

We first summarize the assumptions and notation.  

\subsection{Assumptions and Notation}   

A Markovian model for an individual load is created based on its typical operating behavior. 
\spm{axe for CDC. For example,  a water chiller turns on or off depending upon the temperature of the water.   }This is modeled by a Markov chain with transition matrix denoted $P_0$, with state space  $\state=\{x^1,\dots, x^d\}$;  it is assumed to be \textit{irreducible and aperiodic}.  
It follows that $P_0$ admits a unique invariant pmf (probability mass function), denoted $\pi_0$,  and satisfying $\pi_0(x)>0$ for each $x$.  

It is assumed throughout this paper that the family of transition matrices used for distributed control is of the form,
\begin{equation}
 P_\zeta(x,x')
\eqdef P_0(x,x')\exp\bigl(  \EF_\zeta(x,x')  -  \EFn{h_\zeta}(x)    \bigr)
\label{e:Pzeta}
\end{equation}
in which $\EF_\zeta$ is continuously differentiable in $\zeta$, and $\EFn{h_\zeta}$ is the normalizing constant
\begin{equation}
    \EFn{h_\zeta}(x)     
\eqdef  \log\Bigl( \sum_{x'} P_0(x,x')\exp\bigl(  \EF_\zeta(x,x')     \bigr) \Bigr)
\label{e:NormConstant}
\end{equation}
Each $P_\zeta$ must also be irreducible and aperiodic.

For any transition matrix $P$, an invariant pmf  is interpreted as a row vector, so that invariance can be expressed $\pi P=\pi$.   Any function $f\colon\state\to\Re$ is interpreted as a $d$-dimensional column vector,  and we use the standard notation $Pf\, (x) =   \sum_{x'}P(x,x')f(x')$,  $x\in\state$.

Several other matrices are defined based on $P$ and $\pi$: 
The \textit{adjoint} of $P$ (in $L_2(\pi)$)   is the transition matrix defined by
\begin{equation}
P^\adj (x,x') =  \frac{ \pi(x') }{ \pi(x)} P (x',x) ,\quad x,x'\in\state.
\label{e:adj}
\end{equation}
The \textit{fundamental matrix} is the inverse
\begin{equation}
Z = [I - P + 1\otimes \pi]^{-1}=\sum_{n=0}^\infty  [P - 1\otimes \pi]^n
\label{e:Z}
\end{equation}
with $ [P - 1\otimes \pi]^0 \eqdef I$  (the $d\times d$ identity matrix),
$1\otimes \pi$ is a matrix in which each row is identical, and equal to $ \pi$,
and $[P - 1\otimes \pi]^n = P^n -  1\otimes \pi$ for $n\ge 1$.  


The \textit{Donsker-Varadhan rate function} is denoted,
\begin{equation}
K(P\| P_0) = \sum_{x,x'} \pi(x) P(x,x')   \log \Bigl(\frac{P(x,x') }{P_0(x,x')} \Bigr)
\label{e:DVrate}
\end{equation}  
It is used here to model the cost of deviation from the nominal transition matrix $P_0$, as in   \cite{tod07,guaragwil14,meybarbusyueehr15,busmey14}.

\spm{axe for CDC.
It defines the relative entropy rate between two stationary Markov chains, and appears in the theory of large deviations for Markov chains cite{konmey05a}.     
}

\paragraph*{Nature \&\ nurture}   
In many applications it is necessary to include a model of randomness from nature along with the randomness introduced by the local control algorithm (nurture). \spm{shall we put in parens (nurture) ??}

Consider a load model in which the full state space  is the cartesian product of two finite state spaces:  $\state= \stateu\times\staten$,   where  $\stateu$ are components of the state that can be directly manipulated through control.   
The ``nature'' components $\staten$ are not subject to direct control.  For example, these components may be used to model the impact of the weather on the climate of a building.    


 Elements of   $\state$ are   denoted   $x=(x_u,x_n)$.
Any state  transition matrix under consideration is assumed to have the following conditional-independence structure,
\begin{equation}
P(x,x') = R(x, x_u') Q_0(x,x_n') ,
\label{e:PQ0R}
\end{equation}
for $ x\in\state, x_u'\in\stateu,\ x_n'\in\staten $,
where
$
\sum_{x_u'} R(x, x_u')=\sum_{x_n'}  Q_0(x,x_n') =1
$
for each $x$.
The matrix $Q_0$ is out of our control -- this models load dynamics and exogenous disturbances.
\spm{
, such as the  weather.   
}

\notes{connection with general MDPs will be in math/control  paper}
%
%
 
 \notes{\rDD\ for randomized demand dispatch?  We need an acronym.
 \\
 Ana suggests RDD.   Let's think about it.}
 
\subsection{Common structure for design}
\label{s:commonDD}

The construction of the family of functions $\{h_\zeta : \zeta\in\Re\}$   in \eqref{e:Pzeta}   is achieved using the following steps.

\wham{Step 1:}    
The specification of a function $\clH$ that takes as input a transition matrix $P$ that is irreducible.  The output
$H=\clH(P)$  is a real-valued function on the product space $\state\times \state$.  That is,  $H(x,x') \in   \Re$ for each pair $(x,x')\in \state\times \state$.  
\\
\wham{Step 2:}  
The family of transition matrices $\{P_\zeta\}$ and functions   $\{h_\zeta\}$ are defined by the solution to the $d$-dimensional ODE: 
\begin{equation}
\ddzeta h_\zeta = \clH( P_\zeta) ,\qquad \zeta\in\Re,
\label{e:hH}
\end{equation}
in which $P_\zeta$ is determined by $h_\zeta$ through \eqref{e:Pzeta}.  
The boundary condition for this ODE is $h_0 \equiv 0$.

In the special case in which randomness from nature is not considered,  we can apply the methods described here using  $\staten =\{ x_n^1\}$ (a singleton).  

\spm{axe: In this case,
the method will
generate a family of functions $\{h_\zeta\}$ that do not depend on   $(x_n, x_n')$.}
\spm{fixed following Ana's suggestion}

The conditional independence constraint \eqref{e:PQ0R} imposes constraints on the functions $\{h_\zeta\}$ and the transformation $\clH$.   To ensure that $P_\zeta$ is of the form \eqref{e:PQ0R}, it is sufficient to restrict to functions 
 $h_\zeta$ of $ (x,x')$ that do not depend on $x_n'$,  where $x'=(x_u',x_n')\in\state$.  
For this reason we make the notational convention, 
\[
h_\zeta(x,x') = h_\zeta(x, x_u'),\qquad x\in\state, \  x'=(x_u',x_n')\in\state.
\]
Since $h_\zeta$ is constructed through the ODE \eqref{e:hH},  we impose the same constraints on  $H=\clH(P)$: 
 \[
H(x,x') = H(x, x_u'),\qquad x\in\state, \  x'=(x_u',x_n')\in\state.
\]

Given any function $\preH\colon \state \to \Re$, the function defined below satisfies this constraint:
\spm{no room for this $\,, \quad x\in\state, \  x'=(x_u',x_n')\in\state.$}
\begin{equation}
H(x,x'_u) = \sum_{x_n'} Q_0(x,x_n') \preH(x_u',x_n')  
\label{e:Hnn}
\end{equation}
Each of the methods that follow construct  $H=\clH(P)$ of this form.  Hence the design problem reduces to choosing a mapping $\preH=\preclH(P)$.

\spm{axe: Each of these methods use variations  of the fundamental matrix,  and each relies on the function $\util$ that represents power consumption.  
}

The following normalization is imposed throughout:  The transition matrix $P_\zeta$ defined in \eqref{e:Pzeta} does not change if we add a constant to the function $\EF_\zeta$. We are thus free to normalize  $\preH=\preclH(P)$ by a constant.  Throughout the paper we fix  a state $\xz\in\state$,  and design $\preclH$ so that  $ \preH(\xz)=0$ for any $P$.

The ODE method can be simplified based on these observations. The proof of \Prop{t:preH}
 is straightforward.  
\begin{proposition}
\label{t:preH}
Consider a solution to the ODE \eqref{e:hH} in which $H=\clH(P)$ is of the form  \eqref{e:Hnn} for any matrix $P$.  It then follows that each of the functions $\{h_\zeta\}$ are of this form:
\[
h_\zeta(x,x'_u) = \sum_{x_n'} Q_0(x,x_n') h_\zeta^\circ(x_u',x_n') ,   
\]
$ x\in\state, \  x'=(x_u',x_n')\in\state$,
for some $h_\zeta^\circ\colon\state\to\Re$.  Moreover, these functions   solve the $d$-dimensional ODE,
\[
\ddzeta h_\zeta^\circ = \clH^\circ( P_\zeta) ,\qquad \zeta\in\Re,
\]
in which $P_\zeta$ is determined by $h_\zeta$ through \eqref{e:Pzeta},  and with boundary condition  $h_0^\circ \equiv 0$.
\bqed
\end{proposition}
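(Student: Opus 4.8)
The plan is to recognize that the passage from $\preH$ to $H$ in \eqref{e:Hnn} is effected by a single \emph{fixed} linear operator, and that a fixed linear operator commutes with $\ddzeta$; the proposition then follows at once from uniqueness of solutions to the ODE \eqref{e:hH}. First I would introduce the averaging operator $\clA$ that maps a function $g\colon\state\to\Re$ to the function $\clA g$ of $(x,x_u')$ defined by
\[
(\clA g)(x,x_u') = \sum_{x_n'} Q_0(x,x_n') g(x_u',x_n').
\]
Because $Q_0$ is out of our control, $\clA$ is a linear map that depends neither on $\zeta$ nor on $P$. The hypothesis that $H=\clH(P)$ is of the form \eqref{e:Hnn} for every irreducible $P$ is then precisely the statement that $\clH$ factors as $\clH(P) = \clA\bigl(\preclH(P)\bigr)$, with $\preclH(P)=\preH$.

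Next I would set up the reduced ODE and lift its solution. Write $P[h]$ for the transition matrix produced from a function $h$ of $(x,x_u')$ by \eqref{e:Pzeta}, using the standing convention $h(x,x')=h(x,x_u')$; note $P[\,\cdot\,]$ is smooth in $h$, since its entries are exponentials followed by the normalization \eqref{e:NormConstant}. Let $h_\zeta^\circ$ denote the solution of $\ddzeta h_\zeta^\circ = \preclH(P_\zeta)$, $h_0^\circ\equiv 0$, with $P_\zeta = P[\clA h_\zeta^\circ]$; existence and uniqueness follow from local Lipschitz continuity of the composite vector field $g\mapsto \preclH(P[\clA g])$, which inherits regularity from the smoothness of $P[\,\cdot\,]$ and the assumed regularity of $\preclH$. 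Now define $\tilde h_\zeta \eqdef \clA h_\zeta^\circ$. Since $\clA$ is linear and $\zeta$-independent it commutes with $\ddzeta$, so
\[
\ddzeta \tilde h_\zeta = \clA\,\ddzeta h_\zeta^\circ = \clA\bigl(\preclH(P[\clA h_\zeta^\circ])\bigr) = \clH\bigl(P[\tilde h_\zeta]\bigr),
\]
where the last equality uses the factorization $\clH=\clA\circ\preclH$ together with $P[\tilde h_\zeta]=P[\clA h_\zeta^\circ]=P_\zeta$. Moreover $\tilde h_0 = \clA\,0 = 0$, so $\tilde h_\zeta$ solves the original ODE \eqref{e:hH} with the correct boundary condition.

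Finally I would invoke uniqueness of solutions to \eqref{e:hH} to conclude $h_\zeta=\tilde h_\zeta=\clA h_\zeta^\circ$, which is exactly the claimed form, and then $h_\zeta^\circ$ solves the reduced ODE by construction. The main obstacle is not the algebra — which is merely the commuting of a fixed linear map with differentiation — but the well-posedness bookkeeping: one must verify that the vector fields of both the full and the reduced ODE are locally Lipschitz (so that existence/uniqueness applies) and that the trajectory stays in the region where $P_\zeta$ is irreducible and aperiodic, so that $\clH$ and $\preclH$ are defined along the flow. Both points follow from the smoothness of \eqref{e:Pzeta}--\eqref{e:NormConstant} in $h$ and the standing assumptions on the model. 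Equivalently, one may argue directly that the range of $\clA$ is an invariant subspace for the flow of \eqref{e:hH}, since the vector field $\clH(P[\,\cdot\,])=\clA\,\preclH(P[\,\cdot\,])$ takes values in that range and the initial condition $h_0=0$ lies in it.
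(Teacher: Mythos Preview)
Your argument is correct and is precisely the ``straightforward'' verification the paper alludes to but omits: the paper gives no proof beyond that one word, so there is nothing to compare against. Your formulation via the fixed linear operator $\clA$ and the observation that the range of $\clA$ is invariant for the flow of \eqref{e:hH} is exactly the intended content; the well-posedness caveats you flag (local Lipschitz regularity of $\preclH$ through the fundamental-matrix construction, and irreducibility of $P_\zeta$ along the trajectory) are the only places where care is needed, and you have identified them correctly.
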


\spm{{Important realization Feb 7, 2016:} $H$ is a function on $\state\times \state$,  while  $\preH$ is a function on $\state$.
Doesn't this mean we have made the ODE too complex?  ($d^2$ instead of $d$ dimensional).  Fixed with new proposition}

\subsection{Individual Perspective}
\label{s:IPD}

In this design,  the mapping $\preH=\preclH(P)$ is defined in terms of the fundamental matrix:

  \notes{Is IPD "solution" ok? why not "design"?}


\begin{DesignCDC}{\textbf{IPD solution:}}
Given   $P$,  the fundamental matrix $Z$ is obtained from \eqref{e:Z}, and then for each $x\in\state$,
\begin{equation}
\preH(x) = \sum_{x'} [ Z(x,x')-Z(\xz,x') ] \util (x').
\label{e:fishP}
\end{equation}
\end{DesignCDC}

The function $\preH$ specified in \eqref{e:fishP}
 is a solution to Poisson's equation,
\begin{equation}
P \preH =\preH -\util +\meanutil
\label{e:fish}
\end{equation}
where $\meanutil $ (also written $\pi(\util)$) is the steady-state mean:
\begin{equation}
 \meanutil  \eqdef \sum_x\pi(x) \util(x) 
\label{e:barutil}
\end{equation}
The function \eqref{e:fishP}
 is the unique solution satisfying $\preH(\xz)=0$ \cite[Thm.~17.7.2]{MT}.  

This choice for $\preH$ is called the Individual Perspective Design (IPD) since $h_\zeta$ solves an optimization problem formulated from the point of view of a single load.  
\spm{not clear: The optimization problem is an infinite-horizon optimal control problem of the following form:}
Given  $\zeta\in\Re$,  the ``optimal reward'' is defined by the maximum,
\begin{equation}
\eta^*_\zeta =\max_{\pi,P} \bigl \{\zeta \pi(\util) -  K(P\| P_0)  :  \pi P = \pi \bigr\}
\label{e:ARobjectiveReward}
\end{equation}
and $P$ is also subject to the structural constraint \eqref{e:PQ0R}.  The maximizer defines a transition matrix that is denoted,
\begin{equation}
\cP_\zeta =\argmax_{P} \bigl \{\zeta \pi(\util) -  K(P\| P_0)  : \pi P = \pi \bigr\}
\label{e:ARobjective}
\end{equation}
 
It is shown in \Theorem{t:IPD} that the optimal value $\eta^*_\zeta$ together with a \textit{relative value function} $h^*_\zeta$ solve the average reward optimization equation (AROE):
\begin{equation}
\max_P\Bigl\{ \clW_\zeta(x,P) 
+\sum_{x'} P(x,x') h^*_\zeta(x') \Bigr\} = h^*_\zeta(x) + \eta^*_\zeta
\label{e:AROE}
\end{equation} 
where $\clW_\zeta(x,P)  =\zeta \util(x) 
		-  \sum_{x'}  P(x,x') \log \bigl(\frac{P(x,x') }{P_0(x,x')} \bigr)  $.
		
The relative value function is not unique, since we can add a constant to obtain a new solution. We normalize this function so that  $h^*_\zeta(\xz)=0$.   The proof of the following can be found in the working paper \cite{busmey15a}.
 
\begin{theorem}
\label{t:IPD}
The IPD solution results in a collection of  transition matrices $\{P_\zeta: \zeta\in\Re\}$ with the following properties.   For each $\zeta$,
\begin{romannum}

\item  The transition matrix is optimal,
$P_\zeta = \cP_\zeta$.

\item  
For each $x$ and $x_u'$,  the function $h_\zeta$ that defines $P_\zeta$ is given by,
\begin{equation}
 h_\zeta( x, x'_u)  =  \sum_{x_n'} Q_0(x,x_n') h^*_\zeta(x_u',x_n')
\label{e:hmid}
\end{equation}
where $(h^*_\zeta, \eta^*_\zeta)$ solves the AROE \eqref{e:AROE}.

\item
The steady-state mean power consumption  satisfies,
\begin{equation}
\ddzeta \meanutil_\zeta =  \FRAC{1}{d^2}{d\zeta^2} \eta^*_\zeta \ge 0
\label{e:der-util-IPD}
\end{equation}
and   hence $\meanutil_\zeta$ is monotone in $\zeta$.  \spm{Keep this here and prove!!}\bqed
\end{romannum}
\end{theorem}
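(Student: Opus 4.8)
The plan is to prove part (iii) from the variational characterization \eqref{e:ARobjectiveReward} together with the identification $P_\zeta=\cP_\zeta$ supplied by part (i). The first observation is purely structural: the feasible set in \eqref{e:ARobjectiveReward} --- pairs $(\pi,P)$ with $\pi P=\pi$ obeying the structural constraint \eqref{e:PQ0R} --- does \emph{not} depend on $\zeta$, and for each such fixed pair the objective $\zeta\pi(\util)-K(P\|P_0)$ is affine in $\zeta$. Hence $\eta^*_\zeta$ is a pointwise supremum, over a $\zeta$-independent set, of a family of affine functions of $\zeta$, and is therefore convex in $\zeta$. This alone gives $\FRAC{1}{d^2}{d\zeta^2}\eta^*_\zeta\ge 0$ wherever the second derivative exists, so the inequality in \eqref{e:der-util-IPD} will come essentially for free once the equality is in hand.

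Next I would compute the first derivative by an envelope (Danskin-type) argument. Writing the value as $\eta^*_\zeta=\max_{P}\{\zeta\,\pi_P(\util)-K(P\|P_0)\}$, where $\pi_P$ is the unique invariant pmf of the irreducible matrix $P$, the \emph{only} explicit dependence on $\zeta$ is through the linear term $\zeta\,\pi_P(\util)$, whose partial $\zeta$-derivative is $\pi_P(\util)$. Evaluating at the maximizer $P=\cP_\zeta=P_\zeta$ then yields $\ddzeta\eta^*_\zeta=\pi_\zeta(\util)=\meanutil_\zeta$, using the definition \eqref{e:barutil} with $\pi_\zeta$ the invariant pmf of $P_\zeta$. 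Differentiating this identity once more gives $\ddzeta\meanutil_\zeta=\FRAC{1}{d^2}{d\zeta^2}\eta^*_\zeta$, and combining with the convexity bound above produces \eqref{e:der-util-IPD} and hence the monotonicity of $\meanutil_\zeta$.

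The main obstacle is the rigorous justification of the envelope step: I must know that the maximizer is unique and depends differentiably on $\zeta$, so that $\eta^*_\zeta$ is itself $C^2$ with derivative equal to the partial derivative of the objective at the optimizer. Uniqueness of the maximizer follows from strict convexity properties of the Donsker--Varadhan rate function \eqref{e:DVrate} over the set of stationary pairs, while differentiability of $\zeta\mapsto P_\zeta$ (and thus of $\zeta\mapsto\pi_\zeta$ and $\meanutil_\zeta$) is exactly what the ODE construction \eqref{e:hH} and parts (i)--(ii) provide, since $h_\zeta$ is continuously differentiable in $\zeta$. As a self-contained alternative that sidesteps citing the envelope theorem, I could differentiate the AROE \eqref{e:AROE} in $\zeta$ and pair the result against $\pi_\zeta$, using $\pi_\zeta P_\zeta=\pi_\zeta$ to cancel the terms carrying $\ddzeta h^*_\zeta$; this recovers $\ddzeta\eta^*_\zeta=\meanutil_\zeta$ by direct computation, after which the second differentiation and the convexity bound finish the proof as before.
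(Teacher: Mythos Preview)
The paper does not actually contain its own proof of \Theorem{t:IPD}; it defers the argument to the working paper \cite{busmey15a}.  So there is no in-paper proof to compare against, and I can only assess the soundness of your proposal on its own terms.

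Your argument for part (iii) is correct and is the standard route.  The convexity step is clean: the feasible set in \eqref{e:ARobjectiveReward} is $\zeta$-independent, the objective is affine in $\zeta$ for each fixed $(\pi,P)$, and a pointwise supremum of affine functions is convex, so $\FRAC{1}{d^2}{d\zeta^2}\eta^*_\zeta\ge 0$ follows.  The envelope identification $\ddzeta\eta^*_\zeta=\pi_\zeta(\util)=\meanutil_\zeta$ is also right, and you are correct that the smoothness needed is supplied by the ODE construction \eqref{e:hH} together with parts (i)--(ii).

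One small clarification on your ``self-contained alternative'': pairing the differentiated AROE against $\pi_\zeta$ and using $\pi_\zeta P_\zeta=\pi_\zeta$ does cancel the $\ddzeta h^*_\zeta$ terms, but you are still left with terms carrying $\ddzeta P_\zeta$, namely
\[
\sum_{x'}\Bigl[\ddzeta P_\zeta(x,x')\Bigr]\Bigl( -\log\frac{P_\zeta(x,x')}{P_0(x,x')}-1+h^*_\zeta(x')\Bigr).
\]
These vanish because the bracketed expression is constant in $x'$ (the first-order optimality / Lagrange condition for the inner maximization in \eqref{e:AROE}, restricted to the structural class \eqref{e:PQ0R}) and because $\sum_{x'}\ddzeta P_\zeta(x,x')=0$.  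In other words, your alternative does not really sidestep the envelope theorem; it re-derives it by direct computation.  That is perfectly fine, but you should state explicitly that the $\ddzeta P_\zeta$ contributions cancel by optimality, not merely by invariance of $\pi_\zeta$.
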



\subsection{System Perspective}

The motivation for the following System Perspective Design (SPD) is from the point of view of the BA.
Under general conditions,  the linearized aggregate model is passive,
which is a desirable property from the grid-level perspective. 

The construction of $\preH=\preclH(P)$ is similar to  IPD.  For any matrix $P$
with invariant pmf $\pi$, recall the definition of the adjoint  $ P^\adj $ in \eqref{e:adj}.
The matrix product is denoted
\[
P^\adjnat (x,x') =  \sum_{z\in\state} P^\adj (x,z) P (z,x') ,\qquad 
 x,x'\in\state .
\] 
The fundamental matrix  defined in terms of this transition matrix is denoted $Z^\adjnat = [I - P^\adjnat + 1\otimes \pi]^{-1}$.
\begin{DesignCDC}{\textbf{SPD solution:}}
Given   $P$,  the  matrix $Z^\adjnat$ is obtained, and
\begin{equation}
\!\!
\preH(x) = \sum_{x'} [ Z^\adjnat(x,x')-Z^\adjnat(\xz,x') ] \util (x')\, ,\ \ x\in\state.
\label{e:fishW}
\end{equation}
\end{DesignCDC}
Under additional assumptions, the algorithm obtained from SPD results in a positive real linearization.
The proof of the bound \eqref{e:G+bdd}  is contained in 
\Section{s:passive}. 

\begin{theorem}
\label{t:SPD}
Suppose that the Markov chain with transition matrix $P^\adjnat_0 = P_0^\adj P_0$ is irreducible,
and that $P_0=R_0$  (a model without probabilistic constraints).


Then, the solution to the SPD satisfies the following strict positive-real condition:
the  linearized model at any constant value $\zeta$ obeys the bound,
\notes{define linearized model!  The definition appears in the next section.}
\begin{equation}
G^+_\zeta(e^{j\theta}) +G^+_\zeta(e^{-j\theta})  \ge   \sigma^2_\zeta,\qquad \theta\in\Re
\label{e:G+bdd}
\end{equation}
where $\sigma^2_\zeta$ is the variance of $\util$ under $\pi_\zeta$.
\bqed
\end{theorem}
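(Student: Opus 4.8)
The plan is to first make the transfer function $\TF^+_\zeta$ completely explicit as a spectral object. Linearizing the mean-field recursion \eqref{e:MFJ} about a constant input $\zeta$ with invariant pmf $\pi_\zeta$, and writing $\delta\mu_t$ for the state perturbation and $\delta\zeta_t$ for the input, one obtains $\delta\mu_{t+1}=\delta\mu_t P_\zeta+\nu_\zeta\,\delta\zeta_t$ with output $\delta y_t=\delta\mu_t\util$, where $\nu_\zeta\eqdef\pi_\zeta\,\ddzeta P_\zeta$ is a row vector with $\nu_\zeta\One=0$ (since every row of $\ddzeta P_\zeta$ sums to zero). Reading off $(A,B,C)$ and using \eqref{e:G+}, the Markov parameters are the scalars $CA^kB=\nu_\zeta P_\zeta^k\util$, so that
\[
\TF^+_\zeta(z)=\sum_{k=0}^\infty \bigl(\nu_\zeta P_\zeta^k\util\bigr)\,z^{-k}.
\]
The first task is therefore to compute $\nu_\zeta$ explicitly from the SPD construction.

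\textbf{Computing $\nu_\zeta$ via the ODE and Poisson's equation.} Under the hypothesis $P_0=R_0$ (no ``nature'' component, $\staten$ a singleton), \eqref{e:Hnn} collapses to $H(x,x')=\preH(x')$, a function of $x'$ alone; this is the structural fact that drives the proof. Differentiating the logarithm of \eqref{e:Pzeta} along the ODE \eqref{e:hH}, and using that $\ddzeta\EFn{h_\zeta}(x)=\sum_{x'}P_\zeta(x,x')\,\ddzeta h_\zeta(x,x')$, I would obtain
\[
\ddzeta P_\zeta(x,x')=P_\zeta(x,x')\bigl[\preH(x')-(P_\zeta\preH)(x)\bigr].
\]
Multiplying by $\pi_\zeta(x)$, summing over $x$, using invariance $\pi_\zeta P_\zeta=\pi_\zeta$ on the first term, and rewriting the second term with the adjoint \eqref{e:adj} (so that $\pi_\zeta(x)P_\zeta(x,x')=\pi_\zeta(x')P^\adj_\zeta(x',x)$), the cross term telescopes into $\pi_\zeta(x')\,(P^\adjnat_\zeta\preH)(x')$. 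Hence $\nu_\zeta(x')=\pi_\zeta(x')\bigl[\preH(x')-(P^\adjnat_\zeta\preH)(x')\bigr]$. Now the SPD formula \eqref{e:fishW} is exactly the IPD formula with $Z$ replaced by $Z^\adjnat$, and since $\pi_\zeta$ is invariant for $P^\adjnat_\zeta$ (because $\pi_\zeta P^\adj_\zeta=\pi_\zeta$), the same Poisson argument behind \eqref{e:fish} gives $P^\adjnat_\zeta\preH=\preH-\util+\meanutil$. Substituting yields the clean identity $\nu_\zeta(x')=\pi_\zeta(x')\bigl(\util(x')-\meanutil\bigr)$.

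\textbf{From Markov parameters to the power spectral density.} Writing $\tilutil=\util-\meanutil$ for the centered power function and $\langle f,g\rangle_{\pi_\zeta}=\sum_x\pi_\zeta(x)f(x)g(x)$, the identity for $\nu_\zeta$ turns each Markov parameter into $\nu_\zeta P_\zeta^k\util=\langle\tilutil,P_\zeta^k\tilutil\rangle_{\pi_\zeta}\eqdef R_k$, the lag-$k$ autocovariance of the stationary process $\util(X_t)$, with $R_0=\sigma^2_\zeta$. Thus $\TF^+_\zeta(z)=\sum_{k\ge0}R_k z^{-k}$, the one-sided autocovariance generating function. Evaluating the symmetric combination at $z=e^{j\theta}$,
\[
\TF^+_\zeta(e^{j\theta})+\TF^+_\zeta(e^{-j\theta})
=\sum_{k\ge0}R_k\bigl(e^{-jk\theta}+e^{jk\theta}\bigr)
=\sigma^2_\zeta+\Bigl(R_0+2\sum_{k\ge1}R_k\cos(k\theta)\Bigr).
\]
The bracketed term is precisely the power spectral density $\sum_{k\in\ZZ}R_{|k|}e^{-jk\theta}$ of the real stationary process $\util(X_t)$, which is nonnegative for every $\theta$ (Wiener--Khinchin / Herglotz). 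This gives \eqref{e:G+bdd} with the stated constant $\sigma^2_\zeta$, and strict positivity away from degenerate cases.

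\textbf{Main obstacle.} The computational steps are routine once the structure is in place; the genuine content is twofold. First, one must pin down the linearized triple $(A,B,C)$ from \Section{s:LTI} precisely enough to justify $\TF^+_\zeta(z)=\sum_k(\nu_\zeta P_\zeta^k\util)z^{-k}$, including stability of $A$ restricted to the invariant subspace $\{\delta\mu:\delta\mu\One=0\}$ (guaranteed by irreducibility and aperiodicity of $P_\zeta$). Second, and most importantly, the reduction of $\nu_\zeta$ to $\pi_\zeta\tilutil$ hinges on the SPD Poisson equation $P^\adjnat_\zeta\preH=\preH-\util+\meanutil$, which requires $Z^\adjnat_\zeta$ to be well defined, i.e.\ $P^\adjnat_\zeta$ irreducible; this is where the hypothesis that $P^\adjnat_0=P_0^\adj P_0$ is irreducible (propagated to each $\zeta$) is essential, as is $P_0=R_0$, which guarantees $H(x,x')=\preH(x')$ so that the cross term collapses exactly onto $P^\adjnat_\zeta$. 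Everything else follows from nonnegativity of the spectral density.
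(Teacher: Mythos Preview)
Your proof is correct and follows essentially the same route as the paper: you derive inline what the paper packages as \Prop{t:B} (the adjoint representation $B_i=\pi_\zeta(x^i)[\preH(x^i)-P^\adjnat_\zeta\preH(x^i)]$) and \Lemma{t:CAkB} (the autocovariance interpretation of $CA^kB$), then combine the SPD Poisson equation with nonnegativity of the power spectral density exactly as the paper does. Your remark that the hypothesis must propagate from $P^\adjnat_0$ to $P^\adjnat_\zeta$ (so that $Z^\adjnat_\zeta$ exists along the ODE) is a valid caveat that the paper leaves implicit.
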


The irreducibility assumption on $P^\adjnat_0  $ does not come for free.  Consider for example the  Markov chain on $d\ge 3$ states defined by  $P_0(x^i,x^{i+1}) = 1$ for $1\le i\le d-1$,  and $P_0(x^d,x^d) =P_0(x^d,x^1) = 1/2$.    This chain is irreducible and aperiodic. The behavior of the adjoint is similar;  in particular, $P_0^\adj(x^{i+1},x^i) = 1$ for each $1\le i\le d-1$.   It follows that   $P^\adjnat_0 (x^k,x^k) =1$ for $2\le k\le d$,  so the irreducibility assumption fails.

\subsection{Exponential family}

Rather than solve an ODE,  it is natural to fix a function 
$\preHe\colon\state\to\Re$, and define  for each $x,x'_u$ and $\zeta$,  
\[
\begin{aligned}
h_\zeta(x,x'_u) &=  \zeta\He(x_u' \mid x) 
\\[.2cm]
 \text{with} \quad\He(x_u' \mid x) &\eqdef  \sum_{x_n'} Q_0(x,x_n') \preHe(x_u',x_n') 
\end{aligned}
\]
This  is a special case of the two-step design described in \Section{s:commonDD} in which $\preclH(P)  =   \preHe  $, independent of $P$,  and
the function $H=\clH(P)$ is then obtained from \eqref{e:Hnn}.

In this case, the transition matrices defined in \eqref{e:Pzeta} can be regarded as an \textit{exponential family}.   The exponential family using $\preHe = \util$ will be called the  \textit{myopic design}. 

Other designs can be obtained as linear approximations to the IPD or SPD solutions, with  $\preHe= \preclH(P_0)$.  In the linear approximation of the IPD solution, this is a solution Poisson's equation for the nominal model:  
 \begin{equation}
  P_0 \preHe = \preHe - \tilutil_0
\label{e:fishe}
\end{equation} 
where  $\tilutil_0(x)=\util(x) - \meanutil_0$.  The resulting exponential family is called the IPD${}_0$ design.  It is
 approximately optimal for $\zeta$ near zero -- a proof of \Theorem{t:OptExpMain}
 can be found in \cite{busmey15a}.

\spm{reference to proof added}

\begin{theorem}
\label{t:OptExpMain}
The following approximations hold for the  transition matrices $\{P_\zeta\}$ obtained from the IPD${}_0$ design:
\begin{romannum}
\item  With $\cP_\zeta$ the optimal transition matrix in \eqref{e:ARobjective},

\begin{equation}
P_\zeta(x,x') = \cP_\zeta(x,x') +O(\zeta^2),\quad \text{for all  $x,x', \zeta$}
\label{e:PzetaApproxP}
\end{equation}
\item  Let $\eta_\zeta=\zeta \pi_\zeta(\util) -  K(P_\zeta\| P_0) $ denote the value 
of the quantity in brackets in \eqref{e:ARobjectiveReward}
that is obtained using $(\pi_\zeta,P_\zeta)$.  Then,
$
\eta_\zeta  = \eta_\zeta^* +O(\zeta^4)   
$.
\bqed
\spm{commented out complex "consequently"}
\end{romannum}
\end{theorem}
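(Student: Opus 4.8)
The plan is to view the IPD$_0$ design as the exact first-order Taylor expansion of the full IPD solution, and then to deduce (ii) from the optimality recorded in \Theorem{t:IPD}. Let $h^\circ_\zeta$ denote the exact IPD relative value function, which by \Prop{t:preH} solves the ODE $\ddzeta h^\circ_\zeta = \preclH(P_\zeta)$ with $h^\circ_0\equiv 0$, where $\preclH$ is the Poisson-equation map \eqref{e:fishP} and $P_\zeta$ is recovered from $h^\circ_\zeta$ through \eqref{e:Hnn} and \eqref{e:Pzeta}. I would first argue that the vector field $F(h^\circ)\eqdef\preclH(P_{h^\circ})$ is smooth near $h^\circ=0$: the exponential twist \eqref{e:Pzeta} is smooth in $h^\circ$; the invariant pmf $\pi$ and the fundamental matrix $Z=[I-P+1\otimes\pi]^{-1}$ depend smoothly on $P$ while $P$ stays irreducible; and \eqref{e:fishP} is linear in $Z$. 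Smooth dependence of ODE solutions on initial data then makes $\zeta\mapsto h^\circ_\zeta$ a $C^2$ curve near $0$ with $\ddzeta h^\circ_\zeta\big|_{\zeta=0}=F(0)=\preclH(P_0)=\preHe$, the solution of \eqref{e:fishe}. Hence $h^\circ_\zeta=\zeta\preHe+O(\zeta^2)$, whereas the IPD$_0$ value function is exactly $\zeta\preHe$. The two functions therefore agree to $O(\zeta^2)$; pushing this difference through the smooth maps \eqref{e:Hnn}--\eqref{e:Pzeta} shows that the IPD$_0$ transition matrix $P_\zeta$ equals the exact IPD matrix to $O(\zeta^2)$, and \Theorem{t:IPD}(i) identifies the latter with $\cP_\zeta$. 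This is (i).

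For (ii) I would use that $\eta^*_\zeta$ in \eqref{e:ARobjectiveReward} is a genuine maximum. Reparametrize the feasible set by the joint law $\nu(x,x')=\pi(x)P(x,x')$, subject to the marginal constraint encoding $\pi P=\pi$ and to the structural constraint \eqref{e:PQ0R}; in these coordinates the objective $\zeta\,\pi(\util)-K(P\|P_0)$ is concave in $\nu$, with $\zeta$ appearing only in the linear term, so its Hessian is independent of $\zeta$ and bounded, and the maximizer $\nu^*_\zeta$ is an interior critical point of the feasible manifold. Part (i), together with smoothness of the invariant pmf in $P$, gives $\nu_\zeta-\nu^*_\zeta=O(\zeta^2)$ for the IPD$_0$ feasible point $\nu_\zeta$. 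A second-order expansion of the objective about $\nu^*_\zeta$ along the manifold has vanishing linear term by the first-order optimality conditions, so
\[
\eta_\zeta-\eta^*_\zeta = O\bigl(|\nu_\zeta-\nu^*_\zeta|^2\bigr)=O(\zeta^4).
\]
This is the ``two orders gained'' effect: a control with $O(\zeta^2)$ error in $P$ is evaluated at a stationary point of a smooth, $\zeta$-uniformly-curved objective, so the value error is quadratic in the control error.

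The main obstacle is the smoothness bookkeeping behind both parts. For (i) one must confirm that $P_\zeta$ stays irreducible (and aperiodic) on an entire neighbourhood of $\zeta=0$, so that $\pi_\zeta$, $Z$, and hence $F$ are $C^2$ there and the remainder is uniformly $O(\zeta^2)$ rather than merely pointwise; this is exactly where the standing assumption that every $P_\zeta$ is irreducible and aperiodic is used. For (ii) the delicate point is to obtain $O(\zeta^4)$ and not $O(\zeta^3)$: this needs both the exact vanishing of the first variation at $\nu^*_\zeta$ (interiority and concavity of the program defining $\eta^*_\zeta$) and the fact that the Hessian of the objective carries no positive power of $\zeta$, so that the quadratic form applied to an $O(\zeta^2)$ deviation is honestly $O(\zeta^4)$.
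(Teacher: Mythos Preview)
The paper does not actually prove \Theorem{t:OptExpMain}; it defers the argument to the companion working paper \cite{busmey15a}. So there is no in-paper proof to line up against. That said, your outline is sound and is exactly the argument the paper's machinery invites.

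For (i), reading the IPD$_0$ function $\zeta\preHe$ as the first-order Taylor polynomial of the exact IPD curve $h^\circ_\zeta$ via the ODE in \Prop{t:preH} is the natural route, and smoothness of the exponential twist \eqref{e:Pzeta} transfers the $O(\zeta^2)$ discrepancy in $h^\circ$ to $P_\zeta$; \Theorem{t:IPD}(i) then identifies the target with $\cP_\zeta$.

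For (ii), the edge-measure parametrization is the right device. One point is worth stating outright to close the argument: with $Q_0$ fixed, the structural constraint \eqref{e:PQ0R} is \emph{linear} in $\nu$ (it reads $\nu(x,(x'_u,x'_n))\,Q_0(x,x''_n)=\nu(x,(x'_u,x''_n))\,Q_0(x,x'_n)$), and the stationarity constraint $\pi P=\pi$ is the linear marginal-matching condition on $\nu$. Hence the feasible set is an affine slice of the simplex, so $\nu_\zeta-\nu^*_\zeta$ lies \emph{exactly} in the tangent space and the first variation at $\nu^*_\zeta$ vanishes identically, not merely to leading order. Together with your observation that the Hessian is $-\nabla^2 K$ and carries no factor of $\zeta$, this delivers an honest $O(\zeta^4)$. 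The remaining hypothesis---that $\nu^*_\zeta$ is a relative-interior point so no inequality constraints are active---holds for small $\zeta$ by continuity from $\nu^*_0$, and the claim is local in $\zeta$.
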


A similar result holds if $\preHe $ is chosen based on SPD, with
\[
\preHe = [I - P^\adjnat_0 + 1\otimes \pi_0]^{-1}
\]
The linearization at $\zeta=0$ will be positive-real   under the assumptions of 
\Theorem{t:SPD},  because \eqref{e:G+bdd} continues to hold at $\zeta=0$,
\[
G^+_0(e^{j\theta}) +G^+_0(e^{-j\theta})  \ge   \sigma^2_0,\qquad \theta\in\Re.
\]
An example in \Section{s:tcl} shows that passivity may fail for 
the linearization $G^+_\zeta$ at values of $\zeta$ far from zero.
\spm{we may remove this example}

\subsection{Geometric sampling}

\spm{Ana points out that this is a crappy lead-in.  I'll try on Monday, or perhaps AB can have a go at
substitute text}

Geometric sampling is specified by a transition matrix $S_0$ and a fixed parameter $\gamma\in(0,1)$.  At each time $t$, a weighted coin is flipped with probability of heads equal to $\gamma$.  If the outcome is a tail,  then the   state does not change.  Otherwise,  a transition is made from the current state $x$ to a new state $x'$ with probability $ S_0(x,x')$.   The overall transition matrix is expressed as a convex combination, 
\begin{equation}
P_0 = (1-\gamma)I + \gamma  S_0
\label{e:geoP}
\end{equation}
One motivation for sampling in \cite{chebusmey15b} is to reduce the chance of excessive cycling at the loads,   while ensuring that the data rate from balancing authority to loads is not limited.   It was also found that  this architecture justified a smaller state space for the Markov model.

Based on this nominal model, there are two approaches to applying the  design techniques introduced in this paper.  If $P_0$ is transformed directly,  then the resulting family of transition matrix will be of the form, 
\begin{equation} 
P_\zeta = (1-\gamma_\zeta)I + \gamma_\zeta  S_\zeta
\label{e:geoPzTotal}
\end{equation}
in which $\gamma_\zeta$ is a function of $x$.  That is, if at time $t$ the state is $X(t)=x$ and the input $\zeta_t=\zeta$, then once again a weighted coin is flipped, but with probability of success equal to $\gamma_\zeta (x)$.   Conditioned on success,  a transition is made to state $x'$ with probability $ S_\zeta(x,x')$.  

\spm{Ana asks if \eqref{e:geoPz} would arise if   we declare that the coin flips are from nature.
This is an open question:  If we apply our IPD or SPD to $P_0$, do we get the same family of $S_\zeta$
as when we apply the same algorithm to $P_0$?   }

In some cases it is convenient to fix the statistics of the sampling process, and transform $S_0$ using any of the design techniques described in the previous subsections.   Once the family of transition matrices $\{S_\zeta :\zeta\in\Re\}$ is constructed, we then define 
\begin{equation}
P_\zeta = (1-\gamma)I + \gamma  S_\zeta
\label{e:geoPz}
\end{equation}
Each approach is illustrated through examples in \Section{s:ex}.

\spm{I am trying to explain a little subtlety here.  The coin flip -- is this nature, or nurture?   If it is nature, then we have to use our more complex design.   If it is nurture, then $P_\zeta$ will have a coin flip that depends on state.  This would be cool!   In introducing \eqref{e:geoPz} I am proposing a new idea in which we treat $S_0$ as the input to any of our algorithms.  
 }

\section{Linearized Mean-Field Model} 
\label{s:LTI}

In this section we describe structure for the linearized model in full generality.  We consider a general family of transition matrices of the form \eqref{e:Pzeta}, maintaining the assumption that $\EF_\zeta$ is continuously differentiable in $\zeta$, and that $P_0$ is irreducible and aperiodic.
 
 \subsection{Transfer function}

Representations of the transfer function for the linearization require a bit more notation.  
We denote  $\tilutil_\zeta = \util - \meanutil_\zeta$, with  $\meanutil_\zeta = \pi_\zeta(\util)$.
The derivative of the transition matrix is also a $d\times d$ matrix, denoted
\begin{equation}
\clE_\zeta=\frac{d}{d\zeta} P_\zeta  
\label{e:Pder}
\end{equation}
A simple representation for this matrix is obtained in \Prop{t:lin}, in terms of the function, 
\begin{equation}
H_\zeta(x,x') = \ddzeta \EF_\zeta\, (x,x') ,\quad x,x'\in\state.  
\label{e:hder}
\end{equation}

\spm{axe!!  The bullet comes up for something outside of this paper!}

The invariant pmf  $\pi_\zeta$ for $P_\zeta$ is regarded as the equilibrium state for the mean-field model   \eqref{e:MFJ}, with respect to the constant input value $\zeta_t\equiv \zeta$.   The linearization about this equilibrium is described in \Prop{t:lin}. The proof is omitted since it is minor generalization of    \cite[Prop.~2.4]{meybarbusyueehr15}.
\begin{proposition}
\label{t:lin} 
The linearization of  \eqref{e:MFJ} at a particular value $\zeta$ is the state space model with transfer function,
\begin{equation}
\TF_\zeta(z) = C [Iz-A]^{-1} B
\label{e:TF}
\end{equation}
in which
 $A=P_\zeta^\transpose$,  $C_i=\tilutil_\zeta(x^i)$ for each $i$, and
\begin{equation}
B_i = \sum_x \pi_\zeta(x)\clE_\zeta(x,x^i),\qquad 1\le i\le d
\label{e:Bzeta}
\end{equation}
\bqed
\end{proposition}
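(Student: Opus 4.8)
The plan is to obtain the linearization by a direct first-order perturbation of the recursion \eqref{e:MFJ} about the equilibrium pair $(\pi_\zeta,\zeta)$, and then to read off the state-space data $(A,B,C)$ by matching with the column-vector convention used in \eqref{e:TF}. I would write the state as a row-vector perturbation $\mu_t = \pi_\zeta + \delta\mu_t$ and the input as $\zeta_t = \zeta + \delta\zeta_t$. Since $\EF_\zeta$ is assumed continuously differentiable in $\zeta$, the family $P_\zeta$ is differentiable, and I would Taylor expand $P_{\zeta_t} = P_\zeta + \delta\zeta_t\,\clE_\zeta + o(\delta\zeta_t)$ with $\clE_\zeta = \ddzeta P_\zeta$ as in \eqref{e:Pder}.

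First I would substitute both expansions into $\mu_{t+1}=\mu_t P_{\zeta_t}$, expand the product, and discard the single second-order term $\delta\mu_t\,\delta\zeta_t\,\clE_\zeta$. The zeroth-order terms cancel by invariance, $\pi_\zeta P_\zeta = \pi_\zeta$, leaving the linear recursion
\[
\delta\mu_{t+1} = \delta\mu_t P_\zeta + \delta\zeta_t\,\pi_\zeta\clE_\zeta .
\]
This is already a linear time-invariant system driven by the scalar $\delta\zeta_t$; the only remaining task is to recast it in the column-state convention of \eqref{e:TF}. Transposing, with $s_t = \delta\mu_t^\transpose$, gives $s_{t+1} = P_\zeta^\transpose s_t + \delta\zeta_t\,(\pi_\zeta\clE_\zeta)^\transpose$, which identifies $A = P_\zeta^\transpose$ and $B = (\pi_\zeta\clE_\zeta)^\transpose$, i.e. $B_i = \sum_x \pi_\zeta(x)\clE_\zeta(x,x^i)$, matching \eqref{e:Bzeta}.

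For the output I would linearize the observation \eqref{e:obs}: $y_t = \mu_t\util = \meanutil_\zeta + \delta\mu_t\util$, so $\delta y_t = \delta\mu_t\util = \util^\transpose s_t$. The one point deserving care is the claimed centered form $C_i = \tilutil_\zeta(x^i)$ rather than $C_i = \util(x^i)$. This is justified by noting that any admissible perturbation of a pmf is mass-preserving, $\delta\mu_t\One = 0$, so that $\delta\mu_t\util = \delta\mu_t(\util - \meanutil_\zeta\One) = \delta\mu_t\tilutil_\zeta$; hence $C$ may be taken as $\tilutil_\zeta$, which is the canonical choice (orthogonal to the equilibrium direction). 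Taking $z$-transforms of the state and output equations with zero initial condition then yields $\TF_\zeta(z) = C[Iz-A]^{-1}B$, completing the argument.

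The computation itself is routine; the only genuine difficulties are bookkeeping ones, and they are exactly where a slip would hide: keeping the row/column (transpose) convention consistent between the mean-field recursion and \eqref{e:TF}, correctly isolating and discarding the lone second-order cross term, and verifying the mass-preservation identity $\delta\mu_t\One=0$ that legitimizes the centering $\util\mapsto\tilutil_\zeta$. Because the structure is identical to \cite[Prop.~2.4]{meybarbusyueehr15} apart from $P_\zeta$ being a general differentiable family of the form \eqref{e:Pzeta}, I would finish by checking that differentiability of $\EF_\zeta$ is the sole hypothesis actually used, since it is precisely what guarantees the existence of $\clE_\zeta$ and hence of the linearization.
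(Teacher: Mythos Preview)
Your proposal is correct and is exactly the standard linearization argument the paper has in mind; indeed, the paper omits the proof entirely, noting only that it is a minor generalization of \cite[Prop.~2.4]{meybarbusyueehr15}, and your computation is precisely that generalization. The only point worth flagging is your mass-preservation justification for $C=\tilutil_\zeta$: it is helpful to note explicitly that the constraint $\delta\mu_t\One=0$ is \emph{invariant} under the linearized recursion (since $P_\zeta\One=\One$ and $\clE_\zeta\One=\ddzeta(P_\zeta\One)=0$), so it propagates from the zero initial condition used to define the transfer function---you assert the identity but do not quite close this loop.
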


\spm{missing bullets added to prop.}

Another representation of $B$ is obtained based on  the product $P^\adjnat_\zeta=P^\adj_\zeta P_\zeta$,
where $P^\adj_\zeta$ denotes the adjoint of $P_\zeta$.
\begin{proposition}
\label{t:B}
The derivative of the transition matrix can be expressed in terms of the function \eqref{e:hder}:
\begin{equation} 
\frac{\clE_\zeta(x,x') }{P_\zeta(x,x') } =H_\zeta(x,x') - P_\zeta H_\zeta\, (x)
\label{e:clElogDer}
\end{equation}
where
$
P_\zeta H_\zeta \, (x) = \sum_{x'} P_\zeta (x,x') H_\zeta (x,x')$ for $ x\in\state$.
In the special case in which $H_\zeta (x,x')$ is independent of $x$, the entries of the vector $B$ can be expressed, 
\begin{equation} 
B_i 
=  \pi_\zeta (x^i)[ H_\zeta (x^i) - P^\adjnat_\zeta  H_\zeta \, (x^i) ]
\label{e:BadjForm}
\end{equation}
\end{proposition}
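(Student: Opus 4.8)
The plan is to prove \eqref{e:clElogDer} by a direct logarithmic differentiation of the defining formula \eqref{e:Pzeta}, and then to obtain \eqref{e:BadjForm} by substituting that identity into the representation $B_i = \sum_x \pi_\zeta(x)\clE_\zeta(x,x^i)$ supplied by \Prop{t:lin} and invoking the adjoint relation \eqref{e:adj}.

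First I would take logarithms in \eqref{e:Pzeta} to write $\log P_\zeta(x,x') = \log P_0(x,x') + \EF_\zeta(x,x') - \EFn{h_\zeta}(x)$, and differentiate in $\zeta$. Since the $\log P_0$ term is constant, this gives $\clE_\zeta(x,x')/P_\zeta(x,x') = H_\zeta(x,x') - \ddzeta \EFn{h_\zeta}(x)$, where $H_\zeta = \ddzeta \EF_\zeta$ as in \eqref{e:hder}. The one step that needs care is the derivative of the normalizing constant \eqref{e:NormConstant}: differentiating the logarithm and dividing numerator and denominator by the partition sum reproduces exactly the weights $P_\zeta(x,x')$, so that $\ddzeta \EFn{h_\zeta}(x) = \sum_{x'} P_\zeta(x,x') H_\zeta(x,x') = P_\zeta H_\zeta\,(x)$. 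Substituting this yields \eqref{e:clElogDer}.

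For \eqref{e:BadjForm} I would multiply \eqref{e:clElogDer} through by $P_\zeta(x,x^i)$ and insert the result into $B_i = \sum_x \pi_\zeta(x)\clE_\zeta(x,x^i)$, splitting the sum into two pieces. Under the hypothesis that $H_\zeta(x,x')$ does not depend on its first argument, $H_\zeta$ may be read as a column vector, and the first piece collapses via invariance $\pi_\zeta P_\zeta = \pi_\zeta$ to $\pi_\zeta(x^i) H_\zeta(x^i)$. For the second piece the key is the adjoint identity $\pi_\zeta(x) P_\zeta(x,x^i) = \pi_\zeta(x^i) P_\zeta^\adj(x^i,x)$ coming from \eqref{e:adj}; applying it turns $\sum_x \pi_\zeta(x) P_\zeta(x,x^i)\, P_\zeta H_\zeta\,(x)$ into $\pi_\zeta(x^i)\sum_x P_\zeta^\adj(x^i,x)\, P_\zeta H_\zeta\,(x) = \pi_\zeta(x^i)\, P^\adjnat_\zeta H_\zeta\,(x^i)$, recognizing $P^\adjnat_\zeta = P_\zeta^\adj P_\zeta$. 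Collecting the two pieces gives \eqref{e:BadjForm}.

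There is no serious obstacle here; the argument is a one-line differentiation followed by two bookkeeping steps. The only points demanding attention are (a) recognizing the derivative of the normalizer as the conditional average $P_\zeta H_\zeta$, which hinges on the exponential-family structure of \eqref{e:Pzeta}, and (b) the role of the hypothesis that $H_\zeta$ is independent of its first coordinate, without which $H_\zeta$ could not be treated as a single column vector acted on by $P^\adjnat_\zeta$ and the compact form \eqref{e:BadjForm} would fail.
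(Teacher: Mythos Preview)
Your proposal is correct and follows essentially the same route as the paper: logarithmic differentiation of \eqref{e:Pzeta} to get \eqref{e:clElogDer}, then substitution into \eqref{e:Bzeta} together with the adjoint identity \eqref{e:adj} to obtain \eqref{e:BadjForm}. The only cosmetic difference is that for the first summand the paper applies the adjoint identity and then uses that $P_\zeta^\adj$ is stochastic, whereas you invoke invariance $\pi_\zeta P_\zeta=\pi_\zeta$ directly; both simplifications are equivalent.
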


\begin{proof}
For each $x,x'$,
\[
\ddzeta \log(P_\zeta(x,x')) =   H_\zeta(x,x') - \ddzeta \EFn{h_\zeta}(x)
\]
where we have used the definition $  H_\zeta = \ddzeta h_\zeta $.
The derivative is computed using  \eqref{e:NormConstant}, giving
$ \ddzeta \EFn{h_\zeta}\, (x) = {}$
\[
\begin{aligned}
 &= \ddzeta \log \Bigl( \sum_{x'} P_0(x,x')\exp\bigl(  \EF_\zeta(x,x')     \bigr) \Bigr)
 \\
  &=\exp(- \EFn{h_\zeta}(x))   \sum_{x'} P_0(x,x')\exp\bigl(  \EF_\zeta(x,x')     \bigr) H_\zeta(x,x')  
  \\
    &=    \sum_{x'} P_\zeta (x,x')  H_\zeta(x,x')  
\end{aligned}
\]
which implies \eqref{e:clElogDer}.
If $H_\zeta$ depends only on $x'$ then,
\[
B_i = \sum_x \pi_\zeta(x) P_\zeta(x, x^i)  [H_\zeta(x^i) - P_\zeta H_\zeta\, (x) ]
\]
We can write $ \pi_\zeta(x) P_\zeta (x, x^i) = \pi_\zeta(x^i) P^\adj_\zeta(x^i,x)$ to obtain
\[
\begin{aligned}
B_i  &=  \sum_x \pi_\zeta (x^i) P^\adj(x^i,x)  [H_\zeta (x^i) - P_\zeta H_\zeta \, (x) ]  
\\
& =\pi_\zeta (x^i)H_\zeta (x^i)  -  \pi_\zeta (x^i) \sum_x  P^\adj(x^i,x)  [ P_\zeta H_\zeta \, (x) ]
\end{aligned}
\]
\end{proof}

\subsection{Power spectral density and the positive real condition}
\label{s:passive}

In \cite{busmey14} the transfer function \eqref{e:G+} was considered for a linearized mean-field model in continuous time.   A representation of this transfer function used in this prior work admits a counterpart in the discrete-time setting.

The infinite series on the right hand side of \eqref{e:G+} suggests that we require a probabilistic interpretation of the scalar $ C A^k B  $,  where $(A,B,C) $ are given in \Prop{t:lin}.  This is achieved on defining $\tilutilB(x^i) = B_i/\pi_\zeta(x^i)$ for each $i$;  this is a function on $\state$ whose mean is zero:  $0=\sum_x \pi_\zeta(x) \tilutilB(x)$.  
\begin{lemma}
\label{t:CAkB}
Let $\bfmX$ denote a stationary realization of the Markov chain with transition matrix $P_\zeta$, so that in particular,  $X(k)\sim \pi_\zeta$ for each $k$.  Then, 
\begin{equation}
C A^k B = \Expect[ \tilutilB(X(0) )  \tilutil_\zeta(X(k) )  ] 
\label{e:CAkB}
\end{equation}
\end{lemma}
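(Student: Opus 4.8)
The plan is to verify \eqref{e:CAkB} by a direct expansion, since both sides are explicit bilinear expressions: the left in the data $(A,B,C)$ of \Prop{t:lin}, the right in the stationary joint law of $\bfmX$. The only structural inputs needed are the identifications $A=P_\zeta^\transpose$, $C_i=\tilutil_\zeta(x^i)$, and $B_i=\sum_x\pi_\zeta(x)\clE_\zeta(x,x^i)$ from \Prop{t:lin}, together with the defining relation $B_i=\pi_\zeta(x^i)\tilutilB(x^i)$.

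First I would rewrite the matrix power in forward-in-time form. Since $A=P_\zeta^\transpose$ we have $A^k=(P_\zeta^k)^\transpose$, so $(A^k)_{ij}=(P_\zeta^k)(x^j,x^i)$. Expanding the scalar $CA^kB$ as a double sum over states and substituting $C_i=\tilutil_\zeta(x^i)$ and $B_j=\pi_\zeta(x^j)\tilutilB(x^j)$ gives
\[
C A^k B = \sum_{i,j} \pi_\zeta(x^j)\,\tilutilB(x^j)\,(P_\zeta^k)(x^j,x^i)\,\tilutil_\zeta(x^i).
\]

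Second, I would identify the right-hand side. For the stationary chain with $X(0)\sim\pi_\zeta$, the joint law is $\Prob(X(0)=x,\,X(k)=x')=\pi_\zeta(x)(P_\zeta^k)(x,x')$, so
\[
\Expect[\tilutilB(X(0))\,\tilutil_\zeta(X(k))] = \sum_{x,x'} \pi_\zeta(x)\,(P_\zeta^k)(x,x')\,\tilutilB(x)\,\tilutil_\zeta(x'),
\]
which coincides term-by-term with the previous display under the relabeling $x=x^j$, $x'=x^i$. This establishes the claim; the $k=0$ case reduces to $CB=\sum_i \pi_\zeta(x^i)\tilutilB(x^i)\tilutil_\zeta(x^i)$, consistent with $[P_\zeta]^0=I$.

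There is essentially no obstacle here beyond careful index bookkeeping: the one point demanding attention is the transpose in $A=P_\zeta^\transpose$, which is precisely what converts the ordering in $C[\,\cdot\,]^{-1}B$ into the forward transition $(P_\zeta^k)(x^j,x^i)$ required for the probabilistic reading. I would also record in passing that, because $\tilutilB$ has zero $\pi_\zeta$-mean, the centering of $\util$ in $\tilutil_\zeta$ is immaterial on the right, so that $\Expect[\tilutilB(X(0))\tilutil_\zeta(X(k))]=\Expect[\tilutilB(X(0))\util(X(k))]$; this is the form that later makes contact with the power-spectral-density representation of $\TF^+_\zeta$.
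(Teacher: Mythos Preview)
Your proof is correct and essentially identical to the paper's: both reduce $CA^kB$ to the double sum $\sum_{i,j}\pi_\zeta(x^j)\tilutilB(x^j)P_\zeta^k(x^j,x^i)\tilutil_\zeta(x^i)$ and read it as the stationary expectation. The only cosmetic difference is that the paper first writes the scalar as its own transpose, $CA^kB=B^\transpose P_\zeta^k C^\transpose$, whereas you compute $(A^k)_{ij}=(P_\zeta^k)(x^j,x^i)$ directly from $A=P_\zeta^\transpose$; the resulting sum is the same.
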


\begin{proof}
We have by definition, $C A^k B = B^\transpose P_\zeta^k C^\transpose$, which can be expressed as the sum,
\[ 
B^\transpose P_\zeta^k C^\transpose =
\sum_{i, j}  \pi_\zeta(x^i) \tilutilB(x^i)   P^k_\zeta(x^i,x^j)\tilutil_\zeta (x^j)   
\]
This is equivalent to \eqref{e:CAkB}.
\end{proof} 

With these identities in place we are ready to prove the passivity bound in \Theorem{t:SPD}.


\paragraph*{Proof of \Theorem{t:SPD}}
In the SPD solution without probabilistic constraints, it follows from the design rule \eqref{e:fishW}
 and the representation for the vector $B$ in  \eqref{e:BadjForm} that $
 \tilutilB = \tilutil_\zeta$.  \Lemma{t:CAkB}
 gives the covariance interpretation,
\[
C A^k B = \Expect[\tilutil(X(0)) \tilutil(X(k)) ] 
\]
Let $S^+(\theta)$ denote the power spectral density,
\[
S^+(\theta) = \sigma^2_\zeta+\sum_{k=1}^\infty \Expect[\tilutil(X(0)) \tilutil(X(k)) ] [e^{jk\theta} + e^{-jk\theta} ]
\]
where $\sigma^2_\zeta =\Expect[\tilutil(X(0))^2 ]$.  The bound thus follows from the definitions:
\[
G^+_\zeta(e^{j\theta}) +G^+_\zeta(e^{-j\theta})  = S^+(\theta) + \sigma^2_\zeta\ge \sigma^2_\zeta
\]\vspace{-2em}
\bqed

\spm{conclude somewhere with a statement on the Kalman-Yakubovich-Popov (KYP) Lemma lemma}

\section{Examples} 
\label{s:ex}

\subsection{Rational pools}
\label{s:pp}

\spm{
The pool examples are done -- see v2 notes -- we just need to clean this up.   Biggest problem in this section is that we are suddenly transforming $S_0$ instead of $P_0$.   I think we need a final subsection in Section III entitled ``random sampling'' with motivation and results.
\\
Note that it would also be cool to transform $P_0$ even with geometric sampling.  The sampling rate would then be state dependent.  
\\
We might start from scratch on the TCLs.  Joel's model is very good.}

The load in this case is a pool pump used to maintain water quality in a residential pool.  The pump is assumed to consume 1~kW of power when operating.  In the nominal model it is assumed that it runs for between 8 and 14 hours per day.
In the original model of \cite{meybarbusyueehr15}, the
 state space was taken to be the finite set,
\begin{equation}
\state=\{  (m,k) :  m\in  \{  \oplus,\ominus\}  ,\  k \in \{1,\dots,\clI\} \}  
\label{e:poolstate}
\end{equation}
where $\clI>1$ is an integer.   For the $i$th load,  if $X(t)^i = (\oplus, k)$,  this means that the pool pump is on at time $t$, and has remained on for the past $k$ time units.  

\begin{figure}[h]
\Ebox{.5}{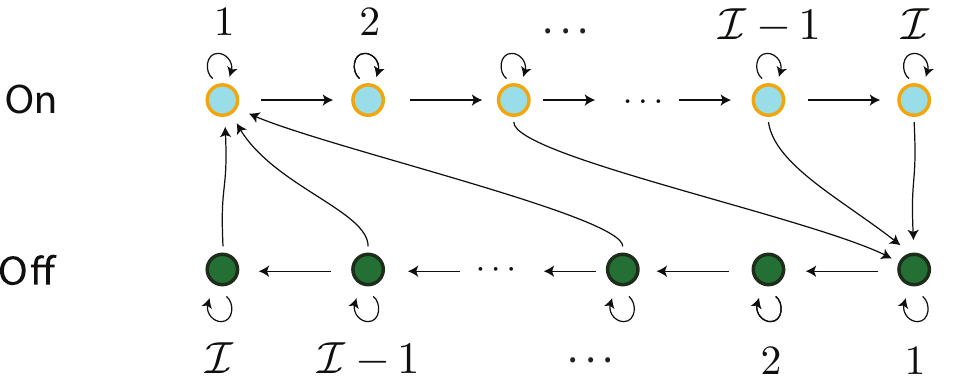} 

\caption{State transition diagram for pool pump model.}
\label{f:ppp}
\vspace{-1ex}
\end{figure} 

In this paper we take the same state space,  but with a different interpretation of each state:
Here as in \cite{chebusmey15b} we employ geometric sampling, so that the nominal state transition matrix is of the form \eqref{e:geoP}. The state transition diagram for $P_0$ is shown in Fig{f:ppp}.

In the experiments that follow, the transition matrix $S_0$ is the model with 12-hour cleaning cycle from  \cite{meybarbusyueehr15}, in which $\clI=48$, and hence $d=|\state|=96$.  It is assumed that the BA sends a signal every five minutes, and that the geometric sampling parameter is $\gamma=1/6$.  Consequently, the state of each load changes every 30 minutes on average. 

\begin{figure}[h]
\Ebox{.55}{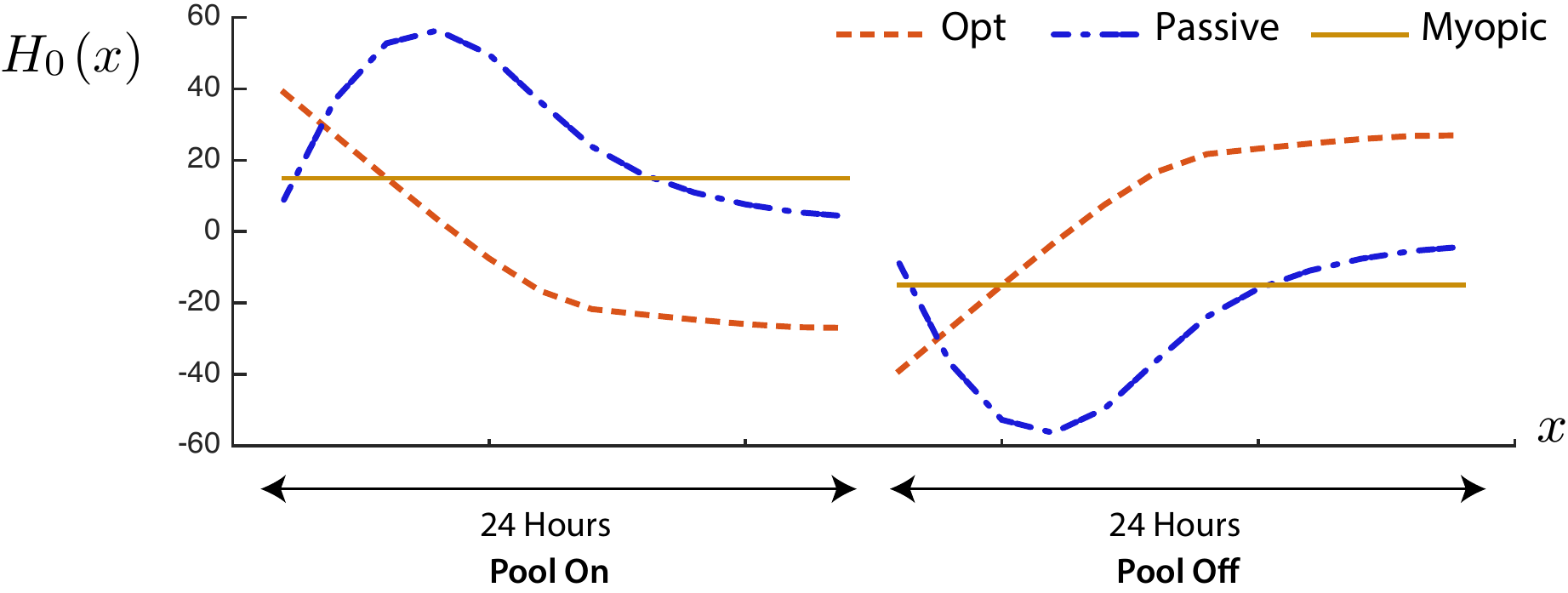}

\caption{$H_0=\clH(P_0)$ in  three designs for the pool pump model.}
\label{f:Pool3H} 
\vspace{-1.5em}
\end{figure}

There is no need to model uncertainty from nature.  Hence the function $H=
\clH(P)$ will depend only on its second variable:  $H(x,x') = H(x')$ for all $(x,x')$.

In each of these experiments the nominal transition matrix $P_0$ was taken as an input to the algorithm, and not $S_0$.  The resulting transition matrix $P_\zeta$ is of the form given in \eqref{e:geoPzTotal},  in which the sampling rate is state-dependent for non-zero $\zeta$.   For the SPD solution it was necessary to use $P_0$ as the input: It can be shown that the transition matrix $P^\adjnat_0 $ is irreducible,  but  $S^\adjnat_0 \eqdef 
S_0^\adj S_0$ is \textit{not} irreducible in this example.   Recall from \Theorem{t:SPD}
 that irreducibility is required to ensure the existence of the SPD solution.
\spm{Note!!}

\Fig{f:Pool3H} shows the function $H_0=\clH(P_0)$ obtained in three sets of experiments.   The normalization in the myopic design uses $H_0=30 (\util-1/2)$.  This is equivalent to using $\util$ since adding a constant does not impact $R_\zeta$,  and the multiplication by $30$ only scales $\zeta$.   A comparison of the three transfer functions $\TF^+$ obtained through a linearization at $\zeta=0$ is shown in \Fig{f:BodePools}.  The myopic design appears to be preferable to the two others:  in particular, the phase plot stays nearest to zero in this design.

\begin{figure}[h]
\Ebox{.475}{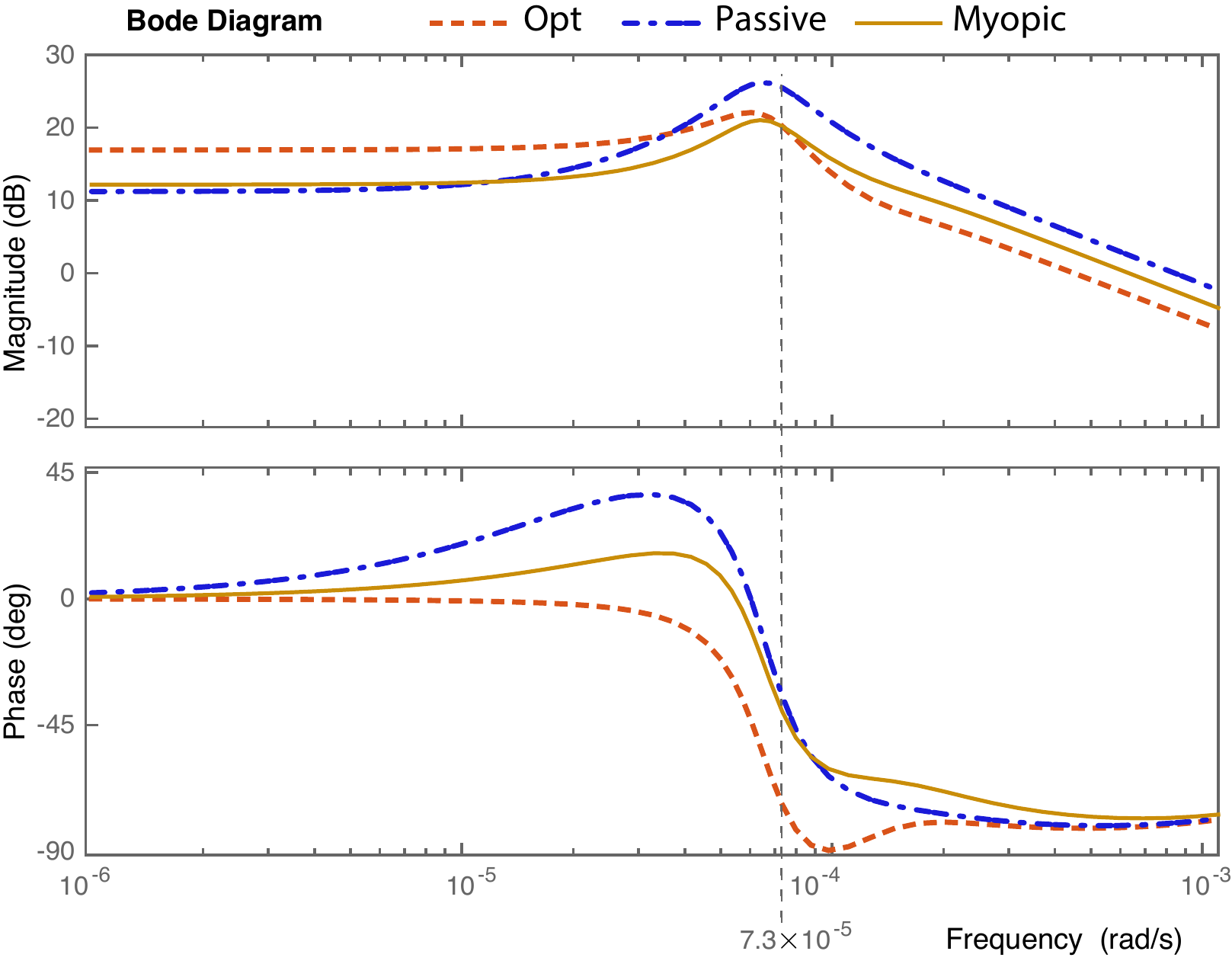}

\caption{Bode plots obtained from three designs for the pool pump model.}
\label{f:BodePools}
\vspace{-1.25ex}
\end{figure} 

One drawback with the myopic design is that we have no basis for analysis.  Difficulties are also observed in numerical experiments if we allow $|\zeta|$ to take on values far from zero.  Of the three designs, the IPD is found to be the most numerically stable in all of the experiments considered, in the sense that the dynamics change predictably with $\zeta$,  and the linearization about each value of $\zeta$ are nearly the same for a large range of $\zeta$.  

In the next subsection we move to a different class of loads in which we cannot ignore exogenous randomness.  We consider in greater depth the difference between the myopic and IPD outcomes for a wider range of $\zeta$.  It is found again that the two designs are very similar locally (for $\zeta\sim 0$),  but the myopic design is numerically unstable for $\zeta$ outside of a small neighborhood of the origin.

 \spm{Ana asks what happens if we apply myopic to $S_0$ instead of $P_0$}

\def\Thetaset{\text{$\Theta_{\text{set}}$}}

\def\Ptrans{\text{$P_{\text{trans}}$}}
\def\Tmin{\text{$\Theta_{\text{min}}$}}
\def\Tmax{\text{$\Theta_{\text{max}}$}}

\subsection{Thermostatically controlled loads}
\label{s:tcl}

A thermostatically controlled load (TCL) such as a water heater, refrigerator, or air-conditioner is a device for which temperature control is achieved using a dead-band.  It is assumed here that  
power consumption takes just two values (on or off).    To simplify discussion,   attention is directed to  a cooling device, such as a residential  refrigerator.  

We begin with a noise free model, described as the controlled linear system,
\begin{equation}
\Theta(k + 1) = \Theta(k) + (1 - \varrho)(\Theta_a - \Theta(k)  - m(k)\Theta_g),
\label{e:TCL}
\end{equation} 
where $m(k)=1$ or $0$ indicates if the unit is on or off, $\Theta_a$ is ambient temperature,  $\varrho$ depends on the dynamics of the load,  and $\Theta_g$ the depends on the physics of the device. One example  considered in \cite{johThesis12} is an air conditioner for which  $\Theta_g = R \Ptrans  = 2\times 14 = 28$ (this and other parameters are summarized in \Table{t:JMTable4.1}). 

A signal from the BA is broadcast at 20 second intervals.   
The parameter $\varrho$ is obtained based on this sampling time, and the product of thermal resistance and capacitance:      $\varrho = \exp[-20/(RC)]$,   with  $RC$  also in units of seconds. 
The value $RC=4$ obtained from \Table{t:JMTable4.1} is in units of hours --- on scaling to seconds we obtain,  
\[
\varrho = e^{-h/(RC)}=e^{-20/(4\times 60^2)} = e^{-1/720} \approx 1-1/720
\]

\begin{table}[h]
\centering
 \begin{tabular}{clc} 
Parameter \qquad & Meaning & Value
 \\
$\Thetaset$ &  set temperature set-point & $20^\circ$C
 \\
$[\Tmin ,\Tmax ]$  & temperature dead-band   & $20 \pm 0.5{}^\circ$C
 \\
$\Theta_a$ & ambient temperature & 32${}^\circ$C
 \\
$R$ & thermal resistance & 2${}^\circ$C/kW
 \\
$C$ & thermal capacitance & 2 kWh/${}^\circ$C    
\\
\Ptrans &  energy transfer rate & 14 kW
\end{tabular}
\caption{Homogeneous air conditioner parameters ---  mean data   from Table 4.1 of \cite{johThesis12}.}
\label{t:JMTable4.1}
\end{table}

This model is based on the physics of heating and cooling, but the  dynamics are accurately captured by a constant drift model:
\begin{equation}
\Theta(k + 1) = \Theta(k)  - m(k) \delta_- +(1-m(k)) \delta_+
\label{e:TCLdrift}
\end{equation}
With drift parameters $ \delta_\pm$ carefully chosen,  the behavior of the two models is barely distinguishable.

\spm{ as illustrated in Fig{f:LinearModelComparison}.}

%


The deterministic model \eqref{e:TCLdrift} is the basis of a stochastic model,
 \begin{equation}
\Theta(k + 1) = \Theta(k)  - m(k) \delta_- +(1-m(k)) \delta_+ +\Delta(k+1)
\label{e:TCLdriftP0}
\end{equation}
in which $\bfDelta$ is a zero-mean, i.i.d.\  sequence.  In the experiments that follow the sampling time was taken to be $2$~seconds,  and $\bfDelta$ was taken to be Gaussian with variance $10^{-6}$  (the small variance is justified with this fast sampling rate).

  \begin{figure}[h] 
\Ebox{.5}{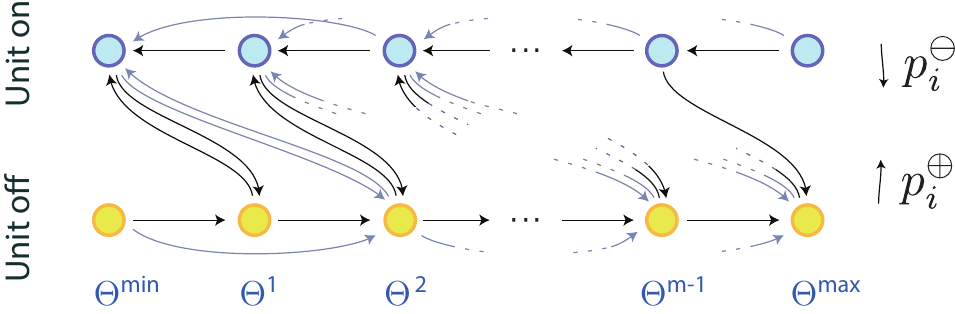}

\caption{State transition diagram for a TCL providing cooling. }
\label{f:fridgeMarkovDynamics}
\vspace{-1.25ex}
\end{figure} 

\spm{Self loops missing.  I agree -- self-loops exist.
I was concerned about complexity.
\\
I can add them, but then I fear I need vertical errors which would be hard to squeeze in.}

A Markov chain model can be constructed with state $X(k) = (m(k), X_n(k))$ evolving on $\state= \stateu\times\staten$,  where $X_n(k) = \Theta(k) $ and $\stateu=\{\ominus,\oplus\}$;  the  interpretation is the same as in the pool filtration model, with the interpretation $m(k) =\oplus$  is the same as $m(k)=1$ in \eqref{e:TCLdriftP0}.
\notes{annoying, I know}  
Temperatures are restricted to a lattice to obtain a finite state-space Markov chain. 
To obtain $d$ states, assume that $d\ge 4$ is an even number,  and discretize the interval $[\Tmin,\Tmax]$ into $d/2$ values as follows:   $\staten=\{  \Tmin + k T_\Delta:  0\le k\le d/2-1 \}$, in which the increments in the lattice are   $T_\Delta=  (\Tmax-\Tmin)/(d/2-1)$.

A nominal randomized policy for $\bfmm$ defines the transition matrix $R_0$.  Following the notation of   \cite{meybarbusyueehr15},  the nominal transition matrix for $\bfmm$ is defined by
\begin{equation}
\begin{aligned}
R_0(x,\oplus)& = 
\begin{cases}
1-p^\ominus(x_n) &   x= (\oplus, x_n)
\\
p^\oplus(x_n) &   x= (\ominus, x_n)
\end{cases}
\\[.2cm]
R_0(x,\ominus) &= 
\begin{cases}
 p^\ominus(x_n) &   x= (\oplus, x_n)
\\
1-p^\oplus(x_n) &   x= (\ominus, x_n)
\end{cases}
\end{aligned}
\label{e:R0pools}
\end{equation}


%
%

 As in \cite{meybarbusyueehr15}, the definition of $p^\ominus$ is based on the specification of a  cumulative distribution function $F^\ominus$ defined on the interval $[\Tmin,\Tmax]$.   This CDF is meant to model the statistics of the time interval during which the unit is off, for the model with continuous state space.   We define $p^\ominus(x_n) =1$ for $x_n=\Tmax$,  $p^\ominus(x_n) = F^\ominus(x_n)$ for $x_n=\Tmin$, 
and for all other values,
\[
p^\ominus(x_n) = [F^\ominus(x_n) - F^\ominus(x_n-T_\Delta)]/[1-F^\ominus(x_n-T_\Delta)]
\] 

In the experiments that follow, the general form taken for $F^\ominus$ was chosen in the parameterized family,
\[
F^\ominus(x_n) = \exp(-(\Tmax-x_n)^\rho/(2\sigma^\rho)), \  \Tmin\le x_n \le \Tmax ,
\]
with $\sigma,\rho>0$. The values $T_\Delta = 0.05$,  $\sigma=0.02$ and $\rho=0.75$ were used for $p^\ominus$ and $p^\oplus$ in the experiments surveyed here.
\spm{need to say something about oplus!}

%

 \begin{figure}[h]
\Ebox{.65}{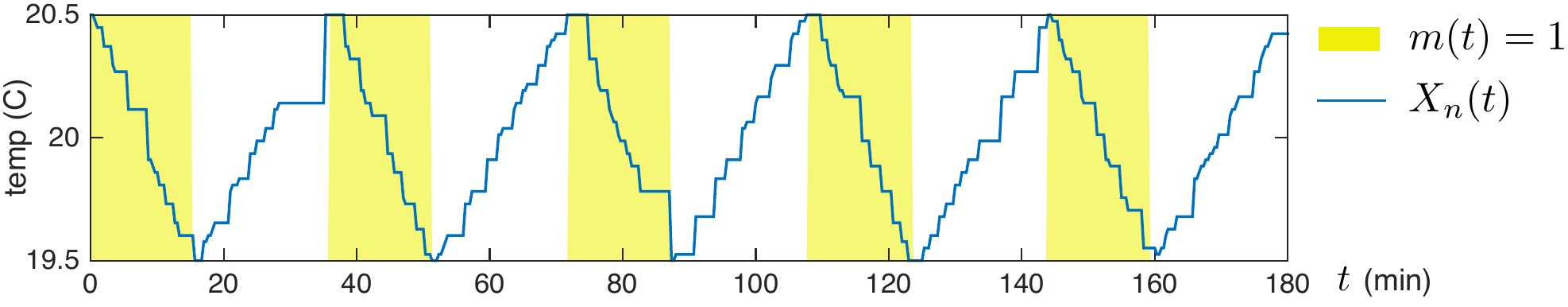}
 
\caption{Trajectory of TCL model with geometric sampling $\gamma=1/3$.}
\label{f:TCLtraj}
\vspace{-1.25ex}
\end{figure} 

In addition, geometric sampling was applied:  a  family of models of the form \eqref{e:geoPz} was constructed, in which $R_0$ was used to define $S_0$ in  \eqref{e:geoP}.   The construction of a model of this form requires a different interpretation of the nature component of the state  $\bfmX_n$.

To obtain dynamics of the form \eqref{e:geoP},  let $\{\tau_k\}$ denote the discrete renewal process in which
$\tau_0=0$  and $\{ \Delta_k = \tau_k - \tau_{k-1} : k \geq 1\}$ is i.i.d., with a geometric marginal:
\[
\Prob\{\Delta_k> n\} =\gamma^n,\qquad n\ge 0,  \ k\ge 1.
\]
The nature component of the state is constant on each discrete-time interval $(t: \tau_k\le t < \tau_{k+1})$,  with $X_n(t) = \Theta(\tau_k)$ on this interval.    Given a nominal randomized policy for the input process $\bfmX_u=\bfmm$,  the nominal transition matrix $Q_0$ can be estimated via Monte-Carlo based on a simulation of \eqref{e:TCLdriftP0},  or from measurements of an actual TCL.  

\spm{axe:
Once $Q_0$ is specified,  the matrix $S_0$ in \eqref{e:geoP} has entries $S_0(x,x') = R_0(x, x_u') Q_0(x,x_n')$.}

\spm{Figure inserted for our education.  It doesn't belong in CDC paper, so commented out for now}


\spm{axe for CDC:  }
%

\spm{I did verify that $S^\adjnat_0 = S_0^\adj S_0$ is irreducible in this example.   Its spectral gap is even a bit better: $\lambda_2(S^\adjnat_0 ) =  
    0.9813$  and $|\lambda_2(S^0 )| =     0.9885$.   Note that
\eqref{e:G+bdd} is not guaranteed since the key assumption of \Theorem{t:SPD} fails:  that $S_0$ is not subject to "probabilistic constraints"  (need a better term?).  }

A Markov model with this transition matrix would also require $m(t)$ constant on each of the intervals
$(t: \tau_k\le t < \tau_{k+1})$,  $k\ge 0$. In  simulations this constraint was  violated occasionally since $m(t)=1$ when $\Theta(t)>\Tmax$,  and $m(t)=0$ when $\Theta(t)<\Tmin$.   This leads to   modeling error that is small, provided $\gamma$ is not too close to unity.   \Fig{f:TCLtraj} shows an example of the evolution of $\bfmX_n$ with $\gamma=1/3$.   The temperature never violates the dead-band constraint because of the constraints imposed on $\bfmX_u$.

\spm{removed trajectories for CDC}


\spm{axe for CDC:}

\begin{figure*}[h!]
    \centering
\begin{subfigure}[t]{0.45\textwidth}
        \includegraphics[width=\hsize]{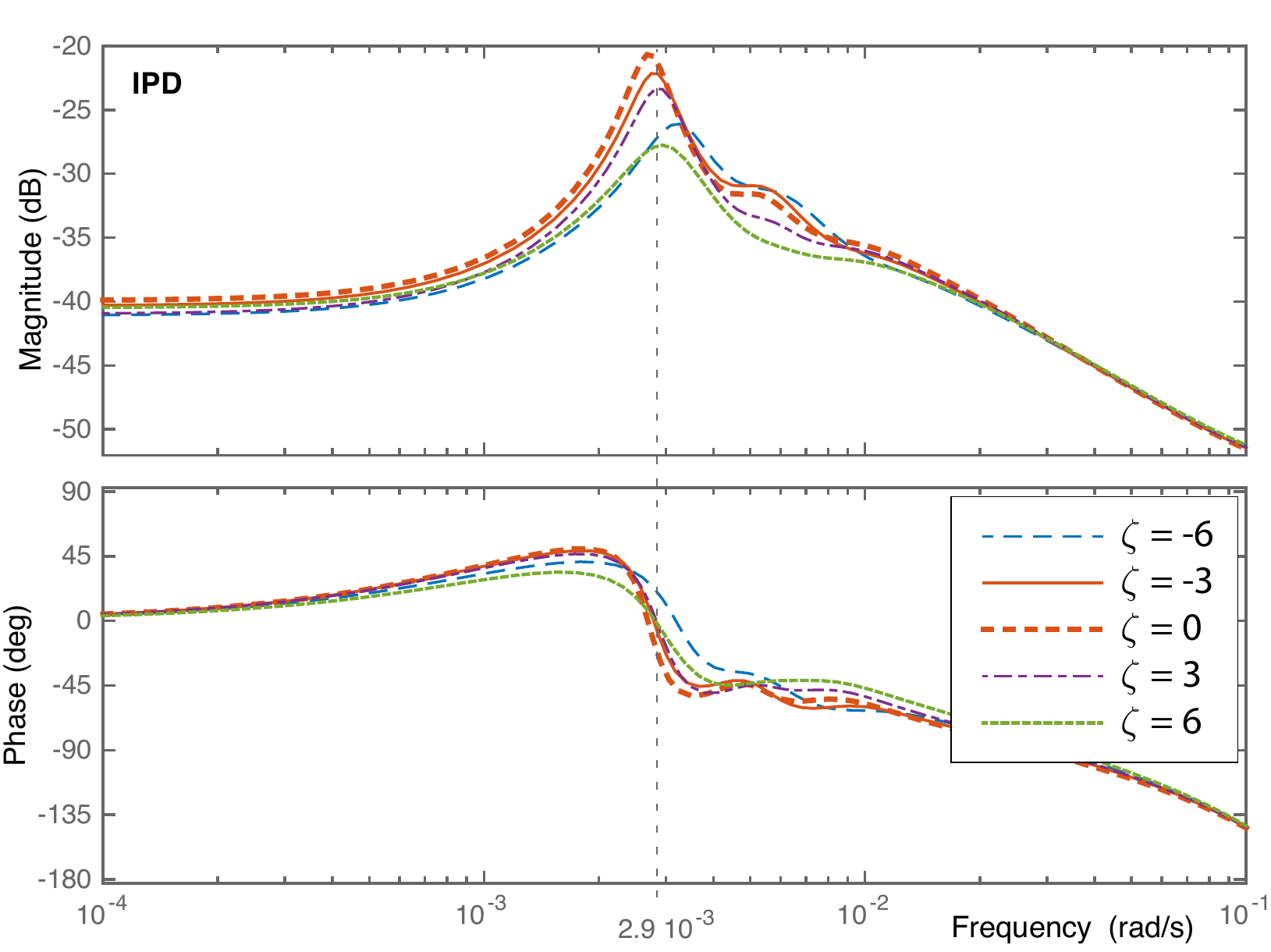}
 \centering
\caption{IPD Design}
\label{f:TCL-OptBode} 
\end{subfigure}  
\hfil
\begin{subfigure}[t]{0.45\textwidth}
\centering
        \includegraphics[width=\hsize]{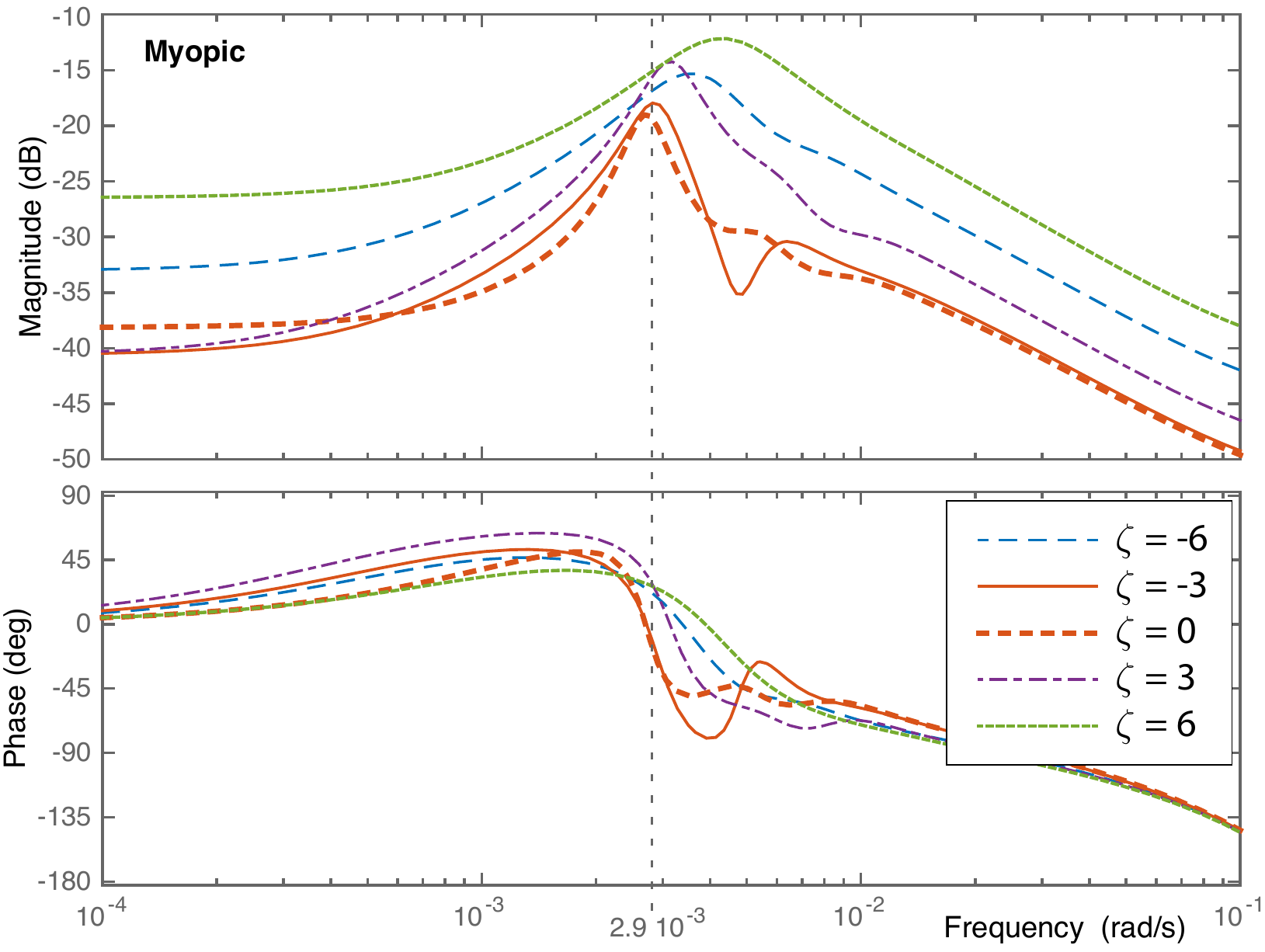}
 
\caption{Myopic Design}
\label{f:TCL-MyopicBode} 
\end{subfigure}  
\smallskip

\caption{Bode plots for two designs, based on linearizations at five values of $\zeta$.  Bode plots for the myopic design are  less reliable for $|\zeta| >3$. 
}
 \end{figure*}  
In this example the transition matrix $S^\adjnat_0 \eqdef S_0^\adj S_0$ is irreducible, so that the SPD solution is computable.   Because of exogenous randomness, there is no motivation for this approach:  
\Theorem{t:SPD} guarantees a passive linearization only when $P_0=R_0$.   Moreover, numerical results using this method were not encouraging:   The resulting family of transition matrices $\{P_\zeta\}$ is extremely sensitive to  $\zeta$.  


%

\notes{Note that for the myopic design \eqref{e:BadjForm}
 holds, with $H_\zeta(x)= \util(x) \exp(\zeta \util(x)) = e^\zeta  \util(x)  $}

%
%

 \spm{axe for cdc:
 The nominal period of this load is about 36 minutes, which corresponds to a frequency of $1/(36\times 60)$~Hz. Dividing by $2\pi$ gives the nominal frequency $2.9\times 10^{-3}$~rads/sec.  This is very close to the resonant frequency seen in each of the Bode plots. 
 }


%
%
%


The linearization about $\zeta=0$ for the myopic design was similar to the IPD solution, but as seen in Figs.~\ref{f:TCL-OptBode} and \ref{f:TCL-MyopicBode},  the behaviors quickly diverge for values beyond $|\zeta|=3$. 
\spm{axe for CDC.
\Fig{f:TCL-MyopicBode} shows the Bode plots obtained from linearizations at several values of $\zeta$ for the myopic design.    The rapid change in dynamics for the linearization implies high sensitivity for the nonlinear system.   }

In conclusion, although the transfer functions for the linearizations at $\zeta=0$ are nearly identical,  in the myopic design the input-output behavior is unpredictable for $|\zeta|>3$.     The input-output behavior for IPD is much closer to a linear system for a  wider range of $\zeta$.  This is consistent with results from prior research  \cite{meybarbusyueehr15,chebusmey14,chebusmey15b}.

\section{Conclusions} 
\label{s:conc}

This paper has developed   new approaches to distributed control for demand dispatch.   There is much more work to do on algorithm design, and large-scale testing.


%
%

\spm{I updated Raginsky and our papers, except for HICSS 2016.  
I also updated  guaragwil14}

\bibliographystyle{IEEEtran}
\def\cprime{$'$}\def\cprime{$'$}

\end{document}